\title[Fundamental groups of framed embedded circles in 4-manifolds]
{On fundamental groups of spaces of framed embeddings of a circle in a 4-manifold}
\author{Danica Kosanovi\'c}
\address{ETH Z\"urich, Department of Mathematics, R\"amistrasse 101, 8092 Z\"urich, Switzerland}
\curraddr{Universität Bern, Mathematisches Institut (MAI), Alpeneggstrasse 22, 3012 Bern, Switzerland}
\email{danica.kosanovic@unibe.ch}
\date{\today}
\begin{document}

\begin{abstract}
    Motivated by recent results on diffeomorphisms of 4-manifolds, this paper investigates fundamental groups of spaces of embeddings of $S^1\tm D^3$ in 4-manifolds. The majority of work goes into the case of framed immersed circles.
\end{abstract}

\maketitle

~~~~~~~~~~
\section{Introduction}\label{sec:intro}

\subsection{Motivation}
\label{subsec-intro:motivation}
This paper is motivated by recent results of Botvinnik, Budney, Gabai, Gay, Hartman, Watanabe~\cite{Watanabe-dim-4,Botvinnik-Watanabe,Gay,Gay-Hartman,Budney-Gabai}, that give various constructions of diffeomorphisms of 4-manifolds.  
In \cite{K-diffeos-from-graspers} these constructions are expressed in terms of a certain map $\ps$ from the fundamental group
\[
    \pi_1(\Emb(\nu\S^1,\X);\nu c)
\]
of framed embeddings of a circle into $\X$, for $\nu\S^1\coloneqq\S^1\tm\D^3$.

On one hand, Watanabe's~\cite{Watanabe-dim-4} and Gay's diffeomorphisms~\cite{Gay} arise from the case $\X=\M\#\,\S^3\tm\S^1$ and $c=\{pt\}\tm\S^1$, which has an interpretation in terms of pseudo-isotopies of $\M$ (diffeomorphisms of $\M\tm[0,1]$ that are the identity on $\M\tm\{0\}$). Cerf described pseudo-isotopies in terms of one-parameter families of generalised Morse functions on $\M\tm[0,1]$. A 1-2 family has a single birth and death of a pair of critical points of indices 1 and 2; in between the family is described by a loop of framed embeddings of a single circle in $\M\#\,\S^3\tm\S^1$. In fact, a diffeomorphism of $\M$ admits a 1-2 pseudo-isotopy if and only if it is in the image of the parameterised surgery map
\[
    \ps\colon \pi_1(\Emb(\nu\S^1,\M\#\,\S^3\tm\S^1);\nu c)\to\pi_0\Diffp(\M). 
\]
On the other hand, Budney--Gabai's barbell implantation diffeomorphisms~\cite{Budney-Gabai} are in the image of the more general map
\[
    \ps_S\colon \pi_1(\Emb(\nu\S^1,\M_{\nu S});\nu c)\to\pi_0\Diffp(\M),
\]
where $\X=\M_{\nu S}$ is the surgery on an embedded sphere $S\colon\S^2\hra \M$, and $\nu c$ is the $\S^1\tm\D^3$ added in the surgery (note that $\X=\M\#\,\S^3\tm\S^1$ is precisely the case when $S$ is unknotted). 
In fact, such a map $\ps_S$ exists for an arbitrary 4-manifold $\X$ and a framed circle $\nu c\colon\nu\S^1\hra \X$. Namely, let $\M\coloneqq X_{\nu c}$ be the surgery on $\nu c$, and let $\nu S\subset\M$ be the added $\S^2\tm\D^2$, so that $\M_{\nu S}$ is diffeomorphic to $X$. Therefore, computing the fundamental group of $\Emb(\nu\S^1,\X)$ is of interest for any $\X$.

The literature on this and related computational problems is not extensive. The fundamental groups $\pi_1(\Emb(\S^1,\X);c)$ of spaces of smooth (non-framed) embeddings of a circle into a $4$-manifold $\X$, for a basepoint $c\colon\S^1\hra\X$, have been studied by Arone and Szymik~\cite{AS}, Moriya~\cite{Moriya}, by Gabai in~\cite{Gabai-disks}, and Budney and Gabai in~\cite{Budney-Gabai}. We have studied them in~\cite{KT-highd,K-Dax}, following the seminal work of Dax~\cite{Dax} in more general settings.

In this paper 
we study the fundamental group of the space $\Emb(\nu\S^1,\X)$ of embeddings of $\nu\S^1\coloneqq\S^1\tm\D^3$ in $\X$, the framed analogue of $\Emb(\S^1,\X)$. We compare them via the map forgetting the framing:
    $\forg\colon \Emb(\nu\S^1,\X)\ra \Emb(\S^1,\X)$,
that is, precomposing with $\S^1\tm\{0\}\hra\S^1\tm\D^3$.
The main work in fact goes into the study of the analogous map $\forg_{\Imm}\colon \Imm(\nu\S^1,\X)\to \Imm(\S^1,\X)$ for immersions,
as we shall explain in the rest of the introduction.

\subsection{The key players}
\label{subsec-intro:summary}

Let us recall that a knot $K\colon\S^1\hra \X$ in a 3-manifold $\X$ has $\Z$-many framings, unless $K$ has a geometric dual (an embedded 2-sphere in $\X$, intersecting $K$ in exactly one point), in which case it has only two nonisotopic framings; see for example~\cite{Chernov}.

On the other hand, for an embedding $c\colon\S^1\hra \X$ in a 4-manifold $\X$ there are two possible framings (nonzero sections of its normal bundle) because $\pi_1SO_3\cong\Z/2$. Whether these framings are isotopic (homotopic through nonzero sections) turns out to depend on the following invariants of $X$.
\begin{defn}
\label{def:ws}
    The \emph{spherical Stiefel--Whitney class}
\[
w_2^s\colon\pi_2\X\ra\Z/2
\]
    denotes the restriction to the image of the Hurewicz map $\pi_2\X\to H_2(\X;\Z)$ of the homomorphism
\[
w_2^h\colon H_2(\X;\Z)\ra\Z/2,
\]
    that we call the \emph{homological Stiefel--Whitney class}. This is the image of the second Stiefel-Whitney class $w_2=w_2(\X)$ under the canonical map $H^2(\X;\Z/2)\to \Hom(H_2(\X;\Z),\Z/2)$. If $w_2^s(b)=1$ then $b\in\pi_2\X$ is represented by an immersed sphere of odd self-intersection. If $w_2^h(\sigma)=1$ then $\sigma\in H_2(\X;\Z)$ is represented by an embedded surface of odd self-intersection.
\end{defn}
\begin{rem}
    The last two sentences come from the usual game: the Stiefel--Whitney class of the pullback of the tangent bundle of $X$ to a surface $\Sigma\to X$ is the Euler number of its normal bundle modulo two: $w_2(TX|_\Sigma)=w_2(\nu(\Sigma)) + w_2(T\Sigma)=e(\nu\Sigma)\pmod2$.
    
    We will see in Lemma~\ref{lem:w2} that $w_2^s$ can be identified with $w_2(\wt{\X})$, so it makes sense to use the following terminology, often found in the literature on classification of 4-manifolds: $X$ is \emph{spin} ($w_2X=0$) or \emph{almost spin} ($w_2X\neq0$, $w_2^sX=0$) or \emph{totally nonspin} ($w_2^sX\neq0$).
\end{rem}
Back to our question, let us fix a framed circle $\nu c$ and let $\tw_{\nu c}\in\pi_0\Emb(\nu\S^1,X)$ be the framing of $c$ that differs from $\nu c$ by a \emph{full twist}, that is, for $A\colon\S^1\to SO_3$ generating $\pi_1SO_3=\Z/2$ we define
\begin{equation}\label{eq:tw}
    \tw_{\nu c}\colon\nu\S^1=\S^1\tm\D^3\hra \X, \quad 
    \tw_{\nu c}(x,v)=\nu c(x,A_x(v)).
\end{equation}
Note that $\tw_{\nu c}$ is trivial (that is, equal to $\nu c$) if and only if there is a loop of nonframed embeddings based at $c$, whose ambient extension starts with $\nu c$ but returns as $\tw_{\nu c}$. See also Remark~\ref{rem:tw}.

In Theorem~\ref{thm-intro:framed-emb} we will see that $\nu c$ and $\tw_{\nu c}$ are isotopic if and only if $w_2^s\neq0$, or $w_2^s=0$ but there is an immersed torus in $\X$ with nontrivial $w_2^h$ and that contains $\bc$.
Such a torus can be foliated into a loop of embedded circles based at $c$, but when trying to lift to framed embeddings, we get only a path that starts with one framing and ends with the other. Therefore, in these cases the cokernel of $\pi_1(\forg,c)$ is equal to $\Z/2$, and otherwise it is trivial.

Independently of whether the two framings are isotopic, we can pick one, say $\nu c$, and form a loop $\rot_{\nu c}$ of framed circles based at $\nu c$ by \emph{letting all the normal disks complete a full rotation}. More precisely, define $\rot_{\nu c}\in\pi_1(\Emb(\nu\S^1,X);\nu c)$ as the loop of framed embeddings
\begin{equation}\label{eq:rot}
    \rot_{\nu c}(t)\colon\nu\S^1=\S^1\tm\D^3\hra \X, \quad 
    \rot_{\nu c}(t)(x,v)=\nu c(x,A_t(v))
\end{equation}
for $t\in\S^1$ and the generating loop $A\colon\S^1\to SO_3$ as above. See also Remark~\ref{rem:rot}.

As we shall see in Theorem~\ref{thm-intro:framed-emb}, this loop is isotopic to the trivial one if and only if there is a class $b\in\pi_2\X$ with $w_2^s(b)=1$, that is fixed by the action of $\bc\in\pi_1\X$, and for which a certain invariant
\begin{equation}\label{eq:dax-whisk}
    \begin{tikzcd}
        \dax^{whisk}_c\colon \Fix_{\bc}(\pi_2\X)
            \rar{}
        & \faktor{\Z[\pi_1\X\sm\{1\}]}{\dax_u(\pi_3(\X\sm\D^4))}
    \end{tikzcd}
\end{equation}
vanishes. In Remark~\ref{rem:dax} we recall the definition of this homomorphism from~\cite{K-Dax}, noting that details will not be of much relevance in this paper. 

\subsection*{Conventions}
We fix basepoints $e\in\S^1$ and $c\in\Emb(\S^1,X)$ and $c(e)\in\X$. We denote by $\bc=[c]\in\pi_1Z=\pi_1(Z;c(e))$ the homotopy class of $c$. 
We write $\nu\S^1\coloneqq\S^1\tm\D^3$ and fix a framing $\nu c\in\Emb(\nu\S^1,X)$ of $c$. We consider the evaluation map
\[
    \ev_e\colon\Emb(\S^1,X)\to X,\quad\gamma\mapsto K(e).
\]
Denote by $\bc\cdot b$ the canonical action of $\bc\in\pi_1Z$ on $b\in\pi_nZ$. We consider
\[
    \Fix_{\bc}(\pi_nZ)\coloneqq\{h\in\pi_nZ:h=\bc\cdot h\}.
\]
For $n=1$ the canonical action is the conjugation action $\bc\cdot h=\bc h \bc^{-1}$, so the subgroup
\[
    \Fix_{\bc}(\pi_1Z)\coloneqq\{h\in\pi_1Z:h=\bc h \bc^{-1}\}<\pi_1Z
\]
is precisely the centraliser of $\bc\in\pi_1Z$.  Note that in this case $y\vee c\colon\S^1\vee\S^1\hra X$ extends to a map $y\tm c\colon \S^1\tm\S^1\hra X$, that defines a class $[y\tm\bc]\in H_2X$.

We foliate $\S^2$ by the meridians, that is, arcs with fixed endpoints at the north and south poles. We fix one of these as our basepoint arc.

\begin{rem}\label{rem:dax}
    First, let $u\colon\D^1\hra X\sm\D^4$ be obtained from $c$ by removing a small 4-ball around the basepoint $c(e)$, giving $u=c|_{\S^1\sm nbhd(e)}$. Given a loop $H_1$ of embedded arcs based at $u$, and a homotopy $H$ from $H_1$ to the constant loop $H_0$ through loops $H_t$ of immersed arcs we have the Dax invariant $\Da(H)$. This counts signed double points of immersed arcs that occur during $H$; see \cite{Dax,Gabai-disks,K-Dax,KT-highd}. 
    We define the Dax homomorphism
    \[
        \dax_u\colon \pi_3(\X\sm\D^4)\ra \Z[\pi_1\X\sm\{1\}]
    \]
    by thinking of $a\colon\S^3\to\X\sm\D^4$ as a 2-parameter self-homotopy $H_a$ of $u$, so $\dax_u(a)\coloneqq\Da(H_a)$.
    
    Finally, for a class $b\in\pi_2\X$ with $c\cdot b=b$, we define 
    \[
        \dax^{whisk}_c(b)=\Da(H_b)
    \]
    for the following homotopy $H_b$ of immersed arcs. 
    First, represent $b\in\pi_2\X$ by $b\colon[0,1]\to \Omega X=\Map((\S^1,e),(X,c(e)))$ with $b(0)=b(1)=\const_{c(e)}$. Second, use ambient isotopy extension to lift this to a family $B\colon [0,1]\to \Omega\Emb(\S^1,X)$ such that $B(1)=\const_c$ and $\ev_e\circ B=b$. Then $\delta^{whisk}_c(b)\coloneqq B(0)|_{\S^1\sm nbhd(e)}\in\pi_1(\Embp(\D^1,X\sm\D^4);u)$ is trivial when included in $\pi_1(\Emb(\S^1,X);c)$. What is more, the equality $\bc\cdot b=b$ means that there is also a homotopy from $\delta^{whisk}_c(b)$ to $u$ through immersed neat arcs; this is our desired $H_b$. We refer to \cite[Thm~4.5]{K-Dax} for details.
\end{rem}

\subsection{Framed immersions}
\label{subsec-intro:framed-imm}
In fact, in order to explain what happens more conceptually, in Theorem~\ref{thm-intro:framed-imm} we first study the analogous map for immersions:
\[
    \forg_{\Imm}\colon \Imm^{\fr}\coloneqq\Imm(\nu\S^1,\X)\ra \Imm(\S^1,\X)\eqqcolon\Imm.
\]

Smale~\cite{Smale} showed that taking the unit derivative gives a homotopy equivalence $\deriv$ from the space of immersions $\Imm(\S^1,\X)$ to the space of all loops $\Lambda \S \X\coloneqq\Map(\S^1,\S \X)$ in the unit sphere subbundle of the tangent bundle. 
Similarly, the space $\Imm(\nu\S^1,\X)$ is homotopy equivalent to the space of all loops $\Lambda \Fr\X=\Map(\S^1,\Fr\X)$ in the frame bundle $\Fr\X$, the space of tuples of four mutually orthogonal unit vectors in the tangent bundle of $\X$. The first vector in the frame corresponds to the tangent vector to the circle, whereas the remaining three vectors describe the normal data.
Therefore, for all $n\geq0$ we have
\begin{align}
    \pi_n(\Imm(\S^1,\X); c) &\cong\pi_n(\Lambda \S \X;\deriv c),\label{eq:Smale}\\
    \pi_n(\Imm(\nu\S^1,\X); \nu c) &\cong\pi_n(\Lambda \Fr\X;\deriv \nu c).\label{eq:Smale-framed}
\end{align}
For this reason, in Section~\ref{subsec:Lambda} we first study homotopy groups of free loop spaces in low degrees. In Section~\ref{subsec:nonframed-imm} we present the folklore description of $\pi_1(\Imm(\S^1,\X);c)$ as a central group extension
\begin{equation}\label{eq:imm}
\begin{tikzcd}[column sep=large]
        \faktor{\pi_2\X}{b=\bc\cdot b}\rar[tail]{\foliate_{\bc}} 
        & \pi_1(\Imm(\S^1,\X);c)\rar[two heads]{\pi_1\ev_e} 
        & \Fix_{\bc}(\pi_1\X).
\end{tikzcd}
\end{equation}
We give a sufficient condition for this to split in Theorem~\ref{thm:imm}.
The foliation map $\foliate_{\bc}$ in~\eqref{eq:imm} sends $b\in\pi_2\X$ to a loop of immersed circles based at $c$, that foliate a representative $B\colon\S^2\to \X$ of $b$. Namely, we can assume that $B$ is an immersed sphere, and our loop starts by pushing a piece of $c$ close to $B$, until it agrees with the basepoint meridian arc in $B$ (see Conventions). It then twirls around, passing through all meridians, and then comes back to $c$.

In Section~\ref{subsec:Fr} we study frame bundles $\Fr\X$. For an oriented 4-manifold $X$ we prove in Corollary~\ref{cor:w2-dim4} that one of the following alternatives holds.
    \begin{enumerate}[leftmargin=75pt]
        \item[(totally nonspin)] 
            $w_2^s\neq0$ $\iff$ There is an immersed sphere in $\X$ of odd self-intersection.
        \item[(h-nonspin)] $w_2^s=0$, $w_2^h\neq0$ 
            $\iff$ All immersed spheres have even self-intersection numbers, but there is an immersed torus in $\X$ of odd self-intersection.
        \item[(h-spin)] $w_2^h=0$, $w_2\neq0$
            $\iff$ There is no section of $H_1(\Fr\X)\sra H_1(\X)$.
            $\iff$ There exists $\alpha\in H_1(\X)$ such that $2\alpha=0$ but $2\wt{\alpha}\neq0\in H_1(\Fr\X)$ for any framed lift $\wt{\alpha}$.
        \item[(spin)] $w_2=0$ 
            $\iff$ $H_1(\Fr\X)\cong\Z/2\oplus H_1\X$.
            $\iff$ There is a consistent choice of framing for each class in $H_1\X$.
    \end{enumerate}
Interestingly, this gives a different proof of \cite[Thm.1]{Kirby-Melvin-Teichner}; see also Remark~\ref{rem:KMT}.
    
Finally, in Section~\ref{subsec:proof} we compute $\pi_1(\Imm(\nu\S^1,\X); \nu c)$, as follows. Recall that $\bc=[c]\in\pi_1\X$.
\begin{mainthm}[Corollary~\ref{cor:framed-imm}]
\label{thm-intro:framed-imm}
    For an oriented  smooth 4-manifold $\X$ and a framed embedded circle $\nu c\colon\nu\S^1=\S^1\tm\D^3\hra\X$, we have the following exact sequences.
\begin{enumerate}
 \item Assume $w_2^s\neq0$ ($\X$ totally nonspin).
    \begin{enumerate}
\item 
    If there exists $b\in\pi_2\X$ with $w_2^s(b)=1$ and $b=\bc\cdot b$, (that is, there is an immersed sphere in $\X$ of odd self-intersection, and fixed by the action of $\bc$), then
    \[
    \begin{tikzcd}[column sep=small]
        \quad
        & \pi_1(\Imm^{\fr};\nu c)
            \rar[tail] 
        & \pi_1(\Imm;c)
            \rar[two heads] 
        & \Z/2, 
        & \nu c=\tw_{\nu c}.
    \end{tikzcd}
    \]
\item 
    Otherwise,
    \[
    \begin{tikzcd}[column sep=small]
        \Z/2
            \rar[tail]{\rot_{\nu c}}
        & \pi_1(\Imm^{\fr};\nu c)
            \rar
        & \pi_1(\Imm;c)
            \rar[two heads]
        & \Z/2, 
        & \nu c=\tw_{\nu c}.
    \end{tikzcd}
    \]
    \end{enumerate}
\item 
Assume $w_2^s=0$ ($\X$ spin or almost spin).
    \begin{enumerate}
\item
If there exists $y\in\Fix_{\bc}(\pi_1\X)$ such that $w_2^h([y\tm\bc])\neq0$ (that is, there is an immersed torus in $\X$ of odd self-intersection, and that contains $\bc$), then
    \[\begin{tikzcd}[column sep=small]
        \Z/2
            \rar[tail]{\rot_{\nu c}}
        & \pi_1(\Imm^{\fr};\nu c)
            \rar
        & \pi_1(\Imm;c)
            \rar[two heads] 
        & \Z/2, 
        & \nu c=\tw_{\nu c}.
    \end{tikzcd}
    \]
\item 
    Otherwise,
    \[\begin{tikzcd}[column sep=small]
        \Z/2
            \rar[tail]{\rot_{\nu c}}
        & \pi_1(\Imm^{\fr};\nu c)
            \rar[two heads] 
        & \pi_1(\Imm;c),
        &\qquad\hfill
        & \nu c\neq\tw_{\nu c}.
    \end{tikzcd}
    \]
    \end{enumerate}
\end{enumerate} 
    Moreover, $\rot_{\nu c}$ splits if $w_2=0$ ($\X$ spin). More generally, this splits whenever the extension $\Z/2\to\Fix_{\nu c}(\pi_1\Fr\X)\sra\Fix_c(\pi_1\X)$ does.
\end{mainthm}

\subsection{Framed embeddings}
\label{subsec-intro:framed-emb}
Having completed the study of framed immersions, in Section~\ref{sec:framed-emb} we turn to framed \emph{embeddings} and the analogous forgetful map
\[
    \forg\colon \Emb^{\fr}\coloneqq\Emb(\nu\S^1,\X)\ra \Emb(\S^1,\X)\eqqcolon\Emb.
\]
It turns out that we need to work just a little bit more: the only difference to Theorem~\ref{thm-intro:framed-imm} will be the appearance of the Dax homomorphism $\dax^{whisk}_c$ from \eqref{eq:dax-whisk}.
\begin{mainthm}
\label{thm-intro:framed-emb}
    For an oriented  smooth 4-manifold $\X$ and a framed embedded circle $\nu c\colon\nu\S^1=\S^1\tm\D^3\hra\X$, we have the following exact sequences.
\begin{enumerate}
 \item Assume $w_2^s\neq0$ ($\X$ totally nonspin).
    \begin{enumerate}
\item 
    If there exists $b\in\pi_2\X$ with $w_2^s(b)=1$ and $b=\bc\cdot b$ and $\dax^{whisk}_c(b)=0$, then
    \[
    \begin{tikzcd}[column sep=small]
        \quad
        & \pi_1(\Emb^{\fr};\nu c)
            \rar[tail] 
        & \pi_1(\Emb;c)
            \rar[two heads] 
        & \Z/2, 
        & \nu c=\tw_{\nu c}.
    \end{tikzcd}
    \]
\item 
    Otherwise,
    \[
    \begin{tikzcd}[column sep=small]
        \Z/2
            \rar[tail]{\rot_{\nu c}}
        & \pi_1(\Emb^{\fr};\nu c)
            \rar
        & \pi_1(\Emb;c)
            \rar[two heads]
        & \Z/2, 
        & \nu c=\tw_{\nu c}.
    \end{tikzcd}
    \]
    \end{enumerate}
\item 
Assume $w_2^s=0$ ($\X$ spin or almost spin).
    \begin{enumerate}
\item
If there exists $y\in\Fix_{\bc}(\pi_1\X)$ such that $w_2^h([y\tm\bc])\neq0$, then
    \[\begin{tikzcd}[column sep=small]
        \Z/2
            \rar[tail]{\rot_{\nu c}}
        & \pi_1(\Emb^{\fr};\nu c)
            \rar
        & \pi_1(\Emb;c)
            \rar[two heads] 
        & \Z/2, 
        & \nu c=\tw_{\nu c}.
    \end{tikzcd}
    \]
\item 
    Otherwise,
    \[\begin{tikzcd}[column sep=small]
        \Z/2
            \rar[tail]{\rot_{\nu c}}
        & \pi_1(\Imm^{\fr};\nu c)
            \rar[two heads] 
        & \pi_1(\Imm;c),
        &\qquad\hfill
        & \nu c\neq\tw_{\nu c}.
    \end{tikzcd}
    \]
    \end{enumerate}
\end{enumerate} 
    Moreover, $\rot_{\nu c}$ splits if $w_2=0$ ($\X$ is spin). More generally, it splits whenever the extension $\Z/2\to\Fix_{\nu c}(\pi_1\Fr\X)\sra\Fix_c(\pi_1\X)$ does.
\end{mainthm}

When $\bc=1$ this result simplifies as follows. In this case $b=\bc\cdot b$ and $\dax^{whisk}_c(b)=0$ for all $b\in\pi_2\X$, so either $w_2^s\neq0$ and $\rot_{\nu c}=0$, or $w_2^s=0$ and $\rot_{\nu c}\neq0$. Moreover,  $\rot_{\nu c}$ splits precisely when $w_2=0$, thanks to Lemma~\ref{lem:FrX}, and $\bc=1$ implies $w_2^h([y\tm\bc])=0$ for all $y\in\pi_1Y$.
\begin{cor}
\label{cor:trivial-framings}
    Fix an oriented  smooth 4-manifold $\X$ and $c\colon\S^1\hra \X$ which is \emph{nullhomotopic}, $\bc=1\in\pi_1\X$.
    If $\X$ is totally nonspin ($w_2^s\neq0$), we have
    \[\begin{tikzcd}[column sep=small]
        \quad
        & \pi_1(\Emb^{\fr}; \nu c)
            \rar[tail] 
        & \pi_1(\Emb;c)
            \rar[two heads] 
        & \Z/2, 
        & \nu c=\tw_{\nu c}.
    \end{tikzcd}
    \]
    Otherwise, if $\X$ is spin or almost spin ($w_2^s=0$), then
    \[\begin{tikzcd}[column sep=small]
        \Z/2
        \rar[tail]{\rot_{\nu c}}
        & \pi_1(\Emb^{\fr}; \nu c)
            \rar[two heads] 
        & \pi_1(\Emb;c),
        & \quad
        & \nu c\neq\tw_{\nu c}.
    \end{tikzcd}
    \]
    Moreover, $\rot_{\nu c}$ splits precisely when $X$ is spin ($w_2=0$).
\end{cor}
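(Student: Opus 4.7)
The plan is to deduce this as a specialisation of Theorem~\ref{thm-intro:framed-emb}, by checking that each of the hypotheses appearing in that statement simplifies drastically under the assumption $\bc=1$. Three reductions are needed: (i) the action of $\bc$ on all higher homotopy is trivial, so $\Fix_{\bc}(\pi_n X)=\pi_n X$ and in particular $b=\bc\cdot b$ for every $b\in\pi_2 X$; (ii) $w_2^h([y\tm\bc])=0$ for every $y\in\pi_1 X$; and (iii) $\dax^{whisk}_c(b)=0$ for every $b\in\pi_2 X$.

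For (ii), since $\bc=1$ the map $y\tm c\colon\S^1\tm\S^1\to X$ extending $y\vee c$ can be chosen to send the second factor to the constant path at $c(e)$, so it factors through the projection $\S^1\tm\S^1\to\S^1$. The fundamental class of the torus therefore has zero image in $H_2(X;\Z)$, so $w_2^h$ vanishes on it. For (iii), the key point is that a nullhomotopic $c$ may be isotoped into a small 4-ball, and in that case the whiskered homotopy $H_b$ arises from a based self-homotopy of $u$ that is already realised inside $X\sm\D^4$. Unpacking the definition of $\delta^{whisk}_c$, the class $[H_b]\in\pi_1(\Embp(\D^1,X\sm\D^4);u)$ is exactly the image of an element of $\pi_3(X\sm\D^4)$ under the standard ``tube on a sphere'' construction, so $\Da(H_b)$ lies in $\dax_u(\pi_3(X\sm\D^4))$ and therefore vanishes in the quotient defining the target of $\dax^{whisk}_c$ in \eqref{eq:dax-whisk}. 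This is the place where the hypothesis $\bc=1$ enters essentially, and it is the one step where some care is needed; I expect it to be the main technical point.

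With (i)--(iii) in hand, the conditions in Theorem~\ref{thm-intro:framed-emb} collapse. In part~(1), the case $\nu c=\tw_{\nu c}$ occurs precisely when $w_2^s\neq 0$, since (ii) rules out the alternative torus condition. In part~(2), a class $b$ with $w_2^s(b)=1$, $b=\bc\cdot b$, and $\dax^{whisk}_c(b)=0$ exists if and only if $w_2^s\neq 0$, by (i) and (iii). Combining these two dichotomies gives the two cases of the corollary. Finally, when $\bc=1$ we have $\Fix_c(\pi_1 X)=\pi_1 X$ and $\Fix_{\nu c}(\pi_1\Fr X)=\pi_1\Fr X$, so the splitting criterion from Theorem~\ref{thm-intro:framed-emb} becomes the splitting of $\Z/2\ira\pi_1\Fr X\sra\pi_1 X$, which by Lemma~\ref{lem:FrX} holds if and only if $w_2=0$.
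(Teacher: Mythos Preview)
Your approach is exactly the paper's: the corollary is deduced from Theorem~\ref{thm-intro:framed-emb} via the simplifications you label (i), (ii), (iii), together with the splitting criterion. The paper's proof is the short paragraph preceding the corollary, which simply asserts (i)--(iii) and invokes Lemma~\ref{lem:FrX} for the splitting. Your treatments of (i), (ii), and the splitting are correct and more explicit than the paper's; in particular the observation that when $\bc=1$ the kernel $\Z/2$ is central in $\pi_1\Fr X$, so $\Fix_{\nu c}(\pi_1\Fr X)=\pi_1\Fr X$ and the criterion reduces to Lemma~\ref{lem:FrX}, is exactly right.

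The gap is in your argument for (iii). You write ``the class $[H_b]\in\pi_1(\Embp(\D^1,X\sm\D^4);u)$ is exactly the image of an element of $\pi_3(X\sm\D^4)$'', but $H_b$ is a nullhomotopy (a disk in $\Immp$ with boundary the loop $\delta^{whisk}_c(b)$), not itself an element of $\pi_1$, and there is no natural class in $\pi_3$ attached to $b\in\pi_2 X$; the ``tube on a sphere'' construction you invoke produces self-homotopies from $\pi_3$-classes, not from $\pi_2$-classes. As written this step is circular: producing such a $\pi_3$-class would require already knowing that $\delta^{whisk}_c(b)$ is null through \emph{embedded} arcs, which is essentially what needs to be shown. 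A cleaner route is to note that for $\bc=1$ one may take $c$ to be a small round circle in a ball and construct the lift $B$ in the definition of $\delta^{whisk}_c$ by parallel-transporting that circle over the $2$-sphere representing $b$; the resulting loop $B(0)$ of based embedded circles then stays in the ball, so $\delta^{whisk}_c(b)$ lies in the image of $\pi_1\Embp(\D^1,B^4\sm\D^4)$, which vanishes (the Dax target $\Z[\pi_1 B^4]/\langle 1\rangle$ is zero and $\pi_1\Immp(\D^1,B^4\sm\D^4)\cong\pi_2 S^3=0$). Hence $\delta^{whisk}_c(b)=0$ already at the level of embedded arcs, and $\dax^{whisk}_c(b)=0$ follows. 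The paper itself does not spell this out either, relying on \cite{K-Dax}.
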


As another example, consider $\X=\M\#\,\S^3\tm\S^1$ for a spin manifold $M$, $w_2(M)=0$. Then $w_2(\X)=w_2(\M)+w_2(\S^3\tm\S^1)=0$ as well, and we are in both ``otherwise'' cases of Theorem~\ref{thm-intro:framed-emb}.

\begin{cor}
\label{cor-intro:framed-stab-case}
    Let $\M$ be a \emph{spin}  smooth 4-manifold, and $c\coloneqq\{pt\}\tm\S^1\subset \M\#\,\S^3\tm\S^1$. Then $\nu c$ and $\tw_{\nu c}$ are not isotopic, and there is an isomorphism
    \[
        \pi_1\big(\Emb(\nu\S^1,\M\#\,\S^3\tm\S^1);\nu c\big)\cong\Z/2\tm\pi_1(\Emb(\S^1,\M\#\,\S^3\tm\S^1);c).
    \]
\end{cor}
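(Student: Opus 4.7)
The plan is to apply Theorem~\ref{thm-intro:framed-emb} directly to $\X\coloneqq \M\#\,\S^3\tm\S^1$. First, I would observe that $\X$ is spin: both $\M$ and $\S^3\tm\S^1$ are spin (the latter is parallelizable), and connected sums of spin $4$-manifolds are spin. Hence $w_2(\X)=0$, so both $w_2^s$ and $w_2^h$ vanish identically on $\X$.

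Next, I would feed this into both parts of Theorem~\ref{thm-intro:framed-emb}. In Part~(1), the vanishing of $w_2^h$ rules out any $y\in\Fix_{\bc}(\pi_1\X)$ with $w_2^h([y\tm\bc])\neq 0$, placing us in the ``otherwise'' branch, which simultaneously yields $\nu c\neq\tw_{\nu c}$ and surjectivity of $\pi_1(\forg,\nu c)$. In Part~(2), the vanishing of $w_2^s$ rules out any $b\in\pi_2\X$ with $w_2^s(b)=1$, so we again land in the ``otherwise'' branch, giving injectivity of the map $\rot_{\nu c}\colon\Z/2\to\pi_1(\Emb^{\fr};\nu c)$ into the kernel of $\pi_1(\forg,\nu c)$. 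Combining the two, we obtain a short exact sequence
\[
    \Z/2\xrightarrow{\rot_{\nu c}}\pi_1(\Emb^{\fr};\nu c)\twoheadrightarrow\pi_1(\Emb;c).
\]

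Finally, the last sentence of Theorem~\ref{thm-intro:framed-emb} guarantees that $\rot_{\nu c}$ splits, since $w_2(\X)=0$. To upgrade the resulting split extension to the claimed direct product, I would observe that $\Z/2$ has no nontrivial automorphisms, so the induced action of $\pi_1(\Emb;c)$ on the kernel must be trivial; this forces the semidirect product to be the direct product $\Z/2\tm\pi_1(\Emb;c)$. Since the corollary is a direct specialization of the main theorem, there is no real obstacle in the argument; the only step worth remarking on is this last automatic promotion from ``split'' to ``direct product''.
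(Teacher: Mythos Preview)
Your argument is correct and follows the paper's approach: the text preceding the corollary observes that $w_2(\X)=w_2(\M)+w_2(\S^3\tm\S^1)=0$, which places us in both ``otherwise'' branches of Theorem~\ref{thm-intro:framed-emb}, and the splitting for $w_2=0$ then yields the product decomposition. Your explicit remark that $\Z/2$ has no nontrivial automorphisms is a helpful clarification of why the split extension is automatically a direct product.
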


In fact, one can show that the situations is similar for any $M$, but we leave this for future work. 

\begin{rem}
    For readers interested in generalising these results to manifolds of dimension $d\geq4$, we expect a similar story. The first interesting homotopy group is $\pi_{d-3}(\Emb(\nu\S^1,X);\nu c)$ and the two key $\Z/2$'s appearing in our exact sequences need to be replaced by $\pi_nSO(d)$ for $n=d-2$, $d-3$, $d-4$ (compare to \eqref{eq:les} and the discussion after it); these belong to the stable range and depend on $n$ modulo $8$, by Bott periodicity.
\end{rem}

\subsection*{Acknowledgements} I wish to thank Mark Powell for useful comments. I am grateful to the referee for valuable suggestions and corrections.

\section{Immersions}
\label{sec:imm}

\subsection{Free loop spaces}
\label{subsec:Lambda}

For a connected space $Z$ consider the space $\Lambda Z=\Map(\S^1,Z)$ of all loops in $Z$, and the fibration that evaluates loops at the basepoint $e\in\S^1$:
\[
    \ev_e\colon\Lambda Z\sra Z,\quad x\mapsto x(e).
\]
Each fibre is homotopy equivalent to the based loop space $\Omega Z\coloneqq\Map((\S^1,e),(Z,*))$. 
Moreover, we have an isomorphism $\foliate\colon\pi_{n+1}Z\xrightarrow{\sim}\pi_n(\Omega Z;\const)$ that foliates an $(n+1)$-sphere by an $n$-parameter family of (based) circles.
The following is a standard result, revisited in \cite[Sec.\ 4.1]{K-Dax}.
\begin{prop}\label{prop:Lambda}
    Let $Z$ be any connected topological space and $c\colon\S^1\to Z$, so that $c(e)=*\in Z$ is the basepoint. Then for $\ev_e$ and $n\geq1$ the connecting map
    \[
        \delta_\Lambda\colon \pi_{n+1}Z\ra 
        \pi_n(\Omega Z;\bc)\cong\pi_n(\Omega Z;\const)\cong\pi_{n+1}Z,
    \]
    is given by $b\mapsto b-\bc\cdot b$.
    In the case $n=0$ the connecting map $\conjbc\colon\pi_1Z\to\pi_0(\Omega Z)\xrightarrow{\sim}\pi_1Z$ acts via $h\mapsto h\bc h^{-1}$. 
\end{prop}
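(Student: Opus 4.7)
The plan is to compute the connecting map of the fibration $\Omega Z \hookrightarrow \Lambda Z \xrightarrow{\ev_e} Z$ directly, by constructing an explicit lift that exploits the contractibility of $\D^{n+1}$. First, represent $b \in \pi_{n+1}(Z,*)$ by a map $\alpha\colon(\D^{n+1},\partial \D^{n+1})\to(Z,*)$ with distinguished basepoint $*_\partial\in\partial \D^{n+1}$, and for each $x\in\D^{n+1}$ let $\eta_x\colon I\to\D^{n+1}$ be the straight-line path from $*_\partial$ to $x$, so that $\alpha\circ\eta_x$ is a path in $Z$ from $*$ to $\alpha(x)$. Define
\[
\widetilde\alpha\colon\D^{n+1}\to\Lambda Z,\qquad \widetilde\alpha(x)\coloneqq(\alpha\circ\eta_x)\cdot c\cdot(\alpha\circ\eta_x)^{-1},
\]
a loop at $\alpha(x)$. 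One checks at once that $\ev_e\circ\widetilde\alpha=\alpha$, that $\widetilde\alpha(*_\partial)=c$, and that $\widetilde\alpha(\partial \D^{n+1})\subset\Omega Z$, so the restriction $\widetilde\alpha|_{\partial \D^{n+1}}$ represents $\delta_\Lambda(b)\in\pi_n(\Omega Z;c)$.

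Second, for $y\in\partial \D^{n+1}$ the path $\zeta_y\coloneqq\alpha\circ\eta_y$ is a loop at $*$, and the family $y\mapsto\zeta_y$ defines a based map $\S^n=\partial \D^{n+1}\to\Omega Z$. Its class in $\pi_n(\Omega Z;\const)\cong\pi_{n+1}Z$ is $b$ itself, because adjointly $(y,t)\mapsto\alpha\bigl((1{-}t)*_\partial+ty\bigr)$ is precisely $\alpha$ composed with the standard quotient $\S^n\tm I\twoheadrightarrow\D^{n+1}/\partial \D^{n+1}\cong\S^{n+1}$. Meanwhile $\widetilde\alpha|_{\partial \D^{n+1}}(y)=\zeta_y\cdot c\cdot\zeta_y^{-1}$.

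Third, to transport $[\widetilde\alpha|_{\partial \D^{n+1}}]$ through the chain $\pi_n(\Omega Z;c)\cong\pi_n(\Omega Z;\const)\cong\pi_{n+1}Z$, apply right H-space multiplication by $c^{-1}$ in $\Omega Z$ to move the basepoint to $\const$, obtaining the representative $y\mapsto\zeta_y\cdot c\cdot\zeta_y^{-1}\cdot c^{-1}$. Splitting this via the loop-concatenation group structure on $\pi_n(\Omega Z;\const)$ — which agrees with the standard $\pi_n$-structure for $n\geq 1$ — gives $[\zeta]+[y\mapsto c\cdot\zeta_y^{-1}\cdot c^{-1}]$. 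The first summand is $b$; invoking the standard description of the $\pi_1 Z$-action on $\pi_{n+1}Z$ as conjugation in $\Omega Z$, the second is $-\bc\cdot b$, yielding $\delta_\Lambda(b)=b-\bc\cdot b$.

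The case $n=0$ is the same construction on $\D^1=I$: for $h\in\pi_1 Z$ represented by a based loop, the lift at the other boundary point is the loop $h\cdot c\cdot h^{-1}$, whose class in $\pi_0\Omega Z\cong\pi_1 Z$ is $h\bc h^{-1}$. The only delicate point is to orchestrate sign and conjugation conventions consistently across the three identifications (choice of radial paths, choice of left versus right H-space translation, and choice of how the $\pi_1 Z$-action on $\pi_{n+1}Z$ corresponds to conjugation in $\Omega Z$); once a convention is fixed, the formula falls out by routine bookkeeping of loop composition.
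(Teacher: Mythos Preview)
The paper does not include its own proof of this proposition: it is presented as a standard result with a reference to \cite[Sec.~4.1]{K-Dax}, and the only additional content is the remark immediately following, which identifies the connecting map for $n\geq1$ with the Whitehead product $[-,\bc]$ and explains why the $n=0$ case differs (no basepoint change from $c$ to $\const$ is applied when passing to $\pi_0\Omega Z\cong\pi_1Z$).

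Your explicit lift via radial conjugation of $c$, followed by the Eckmann--Hilton splitting of the pointwise concatenation in $\pi_n(\Omega Z;\const)$, is the standard direct argument and is correct. Your closing caveat about conventions is well taken and lines up exactly with the paper's remark: the distinction between the formula $b-\bc\cdot b$ for $n\geq1$ and $h\bc h^{-1}$ for $n=0$ arises precisely from whether one right-multiplies by $c^{-1}$ to transport the basepoint, which you do in the former case but not the latter.
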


\begin{rem}
    The second paragraph was stated incorrectly in \cite[Sec.\ 4.1]{K-Dax}, using the action $h\mapsto h\bc h^{-1}\bc^{-1}$ (although the statement was correct in \cite[Thm.\ C.II]{K-Dax}). As explained there, in the case $n\geq1$ the map is the Whitehead product $[-,\bc]$, since the basepoint $c\in\Omega Z$ is first changed to $\const$, in order to make identification with $\pi_{n+1}Z$. However, for $n=0$ and $\pi_0(\Omega\Z)\cong\pi_1Z$ we do not choose a basepoint, so there is no additional multiplication by $\bc^{-1}$.
\end{rem}

\begin{cor}\label{cor:Lambda}
    For a connected topological space $Z$ there is a bijection $\pi_0\Lambda Z\cong\pi_1Z/\sim$ with the set of conjugacy classes of $\pi_1Z$ (so $x\sim z\iff (\exists y\in\pi_1Z)\, z=yxy^{-1}$). Moreover, for a component of $\Lambda Z$ represented by $\bc\in\pi_1Z$ and any $n\geq1$ there is a central group extension:
\begin{equation}\label{eq:Lambda-ext}
\begin{tikzcd}[column sep=large]
    \faktor{\pi_{n+1}Z}{b=\bc\cdot b}\rar[tail]{\foliate_{\bc}} 
    & \pi_n(\Lambda Z;\bc)\rar[two heads]{\pi_n\ev_e} & \Fix_{\bc}(\pi_nZ).
    \qedhere
\end{tikzcd}
\end{equation}
\end{cor}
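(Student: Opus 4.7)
The plan is to derive both statements from the long exact sequence of homotopy groups associated to the evaluation fibration
\[
    \Omega Z\to\Lambda Z\xra{\ev_e}Z,
\]
combined with the explicit form of the connecting map furnished by Proposition~\ref{prop:Lambda}.

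For the set of components I would isolate the tail of the LES
\[
    \pi_1 Z\xra{\conjbc}\pi_0(\Omega Z)\to \pi_0(\Lambda Z)\to \pi_0 Z=*
\]
as a sequence of pointed sets with $\pi_1 Z$-action. Since $Z$ is connected, $\pi_0\Lambda Z$ is the orbit set of the connecting action of $\pi_1 Z$ on $\pi_0\Omega Z\cong\pi_1 Z$. The $n=0$ clause of Proposition~\ref{prop:Lambda} identifies this action as $(h,\bc)\mapsto h\bc h^{-1}$, whose orbits are exactly the conjugacy classes of $\pi_1 Z$.

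For the extension at $\pi_n(\Lambda Z;\bc)$ with $n\geq 1$, I would consider the five-term segment
\[
    \pi_{n+1}Z\xra{\delta_\Lambda}\pi_n(\Omega Z;\bc)\to\pi_n(\Lambda Z;\bc)\xra{\pi_n\ev_e}\pi_n Z\xra{\delta_\Lambda}\pi_{n-1}(\Omega Z;\bc).
\]
Using the standard identification $\pi_k(\Omega Z;\bc)\cong\pi_{k+1}Z$, the first clause of Proposition~\ref{prop:Lambda} turns both instances of $\delta_\Lambda$ into $b\mapsto b-\bc\cdot b$. The kernel of the outgoing $\delta_\Lambda$ is then $\Fix_\bc(\pi_n Z)$, and the cokernel of the incoming $\delta_\Lambda$ is $\pi_{n+1}Z/(b=\bc\cdot b)$, which yields the desired short exact sequence with $\foliate_\bc$ realised as the connecting inclusion of the fiber's $\pi_n$.

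The main remaining point is the centrality claim. For $n\geq 2$ this is automatic since $\pi_n(\Lambda Z;\bc)$ is abelian by the usual Eckmann--Hilton argument on the loop space $\Omega\Lambda Z$. For $n=1$ I would exploit the geometric description of $\foliate_\bc(b)$ as a family of loops passing through $c(e)$ that foliates a representing sphere of $b$: the associated torus $T^2\to Z$ collapses its ``time'' circle to $c(e)$ and so factors through an $S^2$. This factorisation is precisely what allows one to slide a representative of $\foliate_\bc(b)$ past any loop in $\Lambda Z$ based at $\bc$, expressing the commutator as a spherical contribution which vanishes in the coinvariant quotient. I expect this geometric sliding to be the main technical obstacle, and would adapt the argument given in~\cite[Sec.~4.1]{K-Dax} to carry it out precisely.
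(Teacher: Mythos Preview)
Your derivation of the bijection on $\pi_0$ and of the short exact sequence via the long exact sequence of $\ev_e$ together with Proposition~\ref{prop:Lambda} is exactly what the paper intends; the corollary carries no separate proof (note the \verb|\qedhere| inside the display), so on this part you and the paper agree.

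The genuine problem is the word \emph{central}. You are right to single it out as ``the main technical obstacle'' for $n=1$, but the obstacle is not that the argument is delicate: the claim is simply false in general, and the paper itself says so one paragraph later. Lemma~\ref{lem:Lambda-action} computes that the conjugation action of $\Fix_{\bc}(\pi_1Z)$ on the kernel $\pi_2Z/(b=\bc\cdot b)$ is the \emph{natural} $\pi_1Z$-action on $\pi_2Z$, and the sentence preceding that lemma explicitly calls the resulting semidirect product ``nontrivial in general''. A concrete witness is $Z=S^1\vee S^2$ with $\bc=1$: then $\Fix_\bc(\pi_1Z)=\Z$ acts on $\pi_2Z\cong\Z[\Z]$ by translation, which is nontrivial. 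So your proposed sliding argument cannot succeed; the torus representing $\foliate_\bc(b)$ does collapse to a sphere, but conjugating by a loop $G$ that moves the basepoint along $g\in\pi_1Z$ drags that sphere around $g$ and returns $g\cdot b$ rather than $b$, precisely as Lemma~\ref{lem:Lambda-action} shows.

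In short: keep your proof of exactness as written, note that centrality is automatic for $n\geq2$, and for $n=1$ replace the centrality claim by the statement that the extension carries the natural $\pi_1Z$-action (which is the content of Lemma~\ref{lem:Lambda-action}). The word ``central'' in the corollary as stated should be read as a slip.
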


The extensions~\eqref{eq:Lambda-ext} might be nontrivial even in the case $n=1$ and $\bc=1\in\pi_1Z$. In that case there is a right splitting, but as we shall now see the resulting semi-direct product is given by the natural action of $\pi_1Z$ on $\pi_2Z$, which is nontrivial in general.

\begin{lem}\label{lem:Lambda-action}
    The action of $\Fix_{\bc}(\pi_1Z)$ on $\faktor{\pi_2Z}{b=\bc\cdot b}$ in the extension \eqref{eq:Lambda-ext} for $n=1$ is given by the natural action of $\pi_1Z$ on $\pi_2Z$. Thus, it is classified by an element of $H^2(\Fix_{\bc}(\pi_1Z);\faktor{\pi_2Z}{b=\bc\cdot b})$.
\end{lem}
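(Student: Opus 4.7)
The plan is to view the extension~\eqref{eq:Lambda-ext} as arising from the long exact sequence of the fibration $\ev_e\colon\Lambda Z\to Z$ with fiber $\Omega Z$, so that the action of the quotient on the kernel is identified, by a general principle for fibrations, with the monodromy action of $\pi_1Z$ on $\pi_1(\Omega Z;c)$, restricted to the stabiliser. I then identify this monodromy, under the adjunction $\pi_1\Omega Z\cong\pi_2Z$, with the natural $\pi_1Z$-action on $\pi_2Z$.

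Concretely, given $y\in\Fix_{\bc}(\pi_1Z)$, exactness in Proposition~\ref{prop:Lambda} provides a lift $\hat y\in\pi_1(\Lambda Z;c)$ of $y$: the obstruction $\conjbc(y)=y\bc y^{-1}$ equals $\bc$ precisely for $y\in\Fix_{\bc}(\pi_1Z)$. Such a lift corresponds to a map $F\colon\S^1\tm\S^1\to Z$ with $F|_{\{e\}\tm\S^1}=c$ and $F|_{\S^1\tm\{e\}}=y$, i.e.\ a nullhomotopy of the commutator $[y,\bc]$ in $\pi_1Z$. The action of $y$ on the (abelian) kernel is then conjugation by $\hat y$, and is independent of the choice of lift.

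Next, $\foliate_{\bc}(b)$ is by construction the image of the class of $b\in\pi_2Z\cong\pi_1(\Omega Z;\const)$ under the fiber inclusion $\Omega Z\hookrightarrow\Lambda Z$, after shifting the basepoint from $\const$ to $c$ (the identification responsible for killing the relation $b=\bc\cdot b$, via Proposition~\ref{prop:Lambda}). In any fibration, conjugation in $\pi_1$ of the total space by a lift of $y$ coincides with the monodromy action of $y$ on $\pi_1$ of the fiber, which under the adjunction $\pi_1\Omega Z\cong\pi_2Z$ becomes the standard $\pi_1Z$-action on $\pi_2Z$, itself defined by dragging the basepoint of a sphere along a loop. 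The natural action descends to the quotient because $y\in\Fix_{\bc}$ gives $y\cdot(b-\bc\cdot b)=(y\cdot b)-\bc\cdot(y\cdot b)$.

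Once the action is identified, the classification statement is standard: any group extension $A\ira G\sra Q$ with abelian kernel $A$ and prescribed $Q$-action on $A$ is classified up to equivalence by an element of $H^2(Q;A)$. The main obstacle is the identification above of the fibration monodromy with the natural $\pi_1Z$-action on $\pi_2Z$; both are defined by moving basepoints along loops, so the match is conceptually clear, but one must carefully track the adjunction $\pi_1\Omega Z\cong\pi_2Z$ together with the basepoint change from $\const$ to $c$ to confirm that no spurious conjugation by $\bc$ is introduced (compare the subtlety flagged in the Remark following Proposition~\ref{prop:Lambda}).
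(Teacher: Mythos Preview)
Your approach is correct and reaches the same conclusion, but by a genuinely different route than the paper. The paper gives a hands-on argument: it represents a class $b$ in the kernel by an explicit square $B\colon[0,1]^2\to Z$ (the foliation of a sphere, padded out so that $c$ appears on the side boundaries), represents a lift of $g\in\Fix_{\bc}(\pi_1Z)$ by another square $G$, and then shows directly that the horizontal concatenation $GBG^{-1}$ is homotopic rel boundary to the square representing $g\cdot b$, by cancelling the two copies of $G$. No general fibration principle is invoked.

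Your argument instead appeals to the monodromy of the fibration $\ev_e\colon\Lambda Z\to Z$ and the identification of that monodromy on $\pi_1(\Omega Z)\cong\pi_2Z$ with the natural $\pi_1Z$-action. This is cleaner conceptually and would generalise more readily, but, as you yourself acknowledge, the key identification is only sketched: one must check that conjugation by a lift really agrees with fibre transport (true here because the kernel is abelian, so the inner-automorphism ambiguity vanishes), and that the basepoint shift from $\const$ to $c$ introduces no spurious factor of $\bc$ (the subtlety flagged after Proposition~\ref{prop:Lambda}). The paper's explicit computation sidesteps all of this bookkeeping at the cost of being specific to the case at hand. Either approach is acceptable; the paper's has the virtue of being self-contained, while yours explains \emph{why} the answer should be expected.
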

\begin{proof}
    We need to compute the action of $\mathsf{g}=[g]\in\Fix_{\bc}(\pi_1Z)$ on $[B]\in\pi_1(\Lambda Z;\bc)$ coming from $b\in\pi_2Z$. Note that an element in $\pi_1(\Lambda Z;c)$ is represented by a map $B\colon[0,1]\tm[0,1]\to Z$, $B(s,\theta)= B_s(\theta)$, with $B_0=B_1=c$ and $B_s(0)=B_s(1)=c(e)$ for all $s\in[0,1]$. 
    
    First, if $\ol{B}\colon[0,1]^2\to Z$ represents $b\in\pi_2Z$, with $\partial\ol{B}=\const_{c(e)}$, then $B$ can be obtained by putting $\ol{B}$ into a bigger square, and connecting each point in the outer boundary to the corresponding point of $\partial B$ via $c$ (on the sides) or $\const_{c(e)}$ (on the top and bottom).
    Second, we pick a class $[G]\in\pi_1(\Lambda Z;\bc)$ that projects down to $\pi_1\ev_e(G)=g$, so $G\colon[0,1]\tm[0,1]\to Z$, $G(s,\theta)=G_s(\theta)$ with $G_0=G_1=c$ and $G_s(e)=g(s)$ for all $s\in[0,1]$. Such a $G$ exists precisely because $cg$ is homotopic to $gc$. Now, we need to show that the class $[GBG^{-1}]\in\ker(\pi_1\ev_e)$ represents $\mathsf{g}\cdot b\in\pi_2Z$.

    On one hand, $GBG^{-1}$ is given by concatenating loops in the $s$-direction: for a fixed $\theta$ we have a loop $G_{\bull}(\theta)B_{\bull}(\theta)G_{\bull}(\theta)^{-1}$ in $Z$. In other words, we are stacking the squares horizontally. On the boundary we see $cc^{-1}$, so we also add a nullhomotopy of this, to obtain a square with $c(e)$ on the boundary, describing a class in $\pi_2Z$.
    
    On the other hand, the class $\mathsf{g}\cdot b$ is given by putting the square $\ol{B}$ into a bigger square and connecting each point in the outer boundary to the corresponding point of $\partial B$ via $g$. A path in $\pi_2Z$ to this picture from the one for $GBG^{-1}$ can be obtained by cancelling the two copies of $G$.
\end{proof}

\begin{lem}\label{lem:rot}
    If $\Fix_{\bc}(\pi_1Z)$ is equal to the subgroup $\langle \bc\rangle<\pi_1Z$ generated by $\bc$, then the extension~\eqref{eq:Lambda-ext} for $n=1$ has a splitting $\rot$, that is, $\pi_1\ev_e\circ\rot=\Id$. This implies that we have an isomorphism
\[
    \pi_1\ev_e\tm\unrot\colon\;
        \pi_1(\Lambda Z;\bc)
        \xrightarrow{\cong} 
        \langle\bc\rangle
        \tm\faktor{\pi_2Z}{b=\bc\cdot b},
\]
    where $\unrot(\alpha)$ is the sphere traced out by $\alpha\cdot(\rot\circ\pi_1\ev_e(\alpha))^{-1}$. In particular, the fundamental group of the component $\bc\in \Lambda Z$ is abelian.
\end{lem}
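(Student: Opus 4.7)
The plan is to construct the splitting using the canonical $\S^1$-action on $\Lambda Z$ by rotating the parameter. I let $\rot \in \pi_1(\Lambda Z; c)$ denote the class of the orbit loop $\S^1 \to \Lambda Z$, $t \mapsto c(t+\cdot)$, which lifts $\bc$ under $\pi_1\ev_e$ by construction. The aim is to show that $\bc^k \mapsto \rot^k$ descends to a well-defined homomorphism from $\langle\bc\rangle$.

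Before splitting, I would first verify that under the hypothesis $\Fix_{\bc}(\pi_1 Z) = \langle\bc\rangle$ the extension~\eqref{eq:Lambda-ext} is \emph{central}: by Lemma~\ref{lem:Lambda-action} the action of $\Fix_{\bc}$ on $\faktor{\pi_2 Z}{b=\bc\cdot b}$ coincides with the natural $\pi_1 Z$-action, and since every $g \in \Fix_{\bc}$ equals $\bc^k$ for some $k$ with $\bc^k - 1 = (\bc-1)(1+\bc+\cdots+\bc^{k-1})$ as operators on $\pi_2 Z$, this action becomes trivial on the quotient by $(\bc-1)\pi_2 Z$. Granted any section, centrality then makes the extension a direct product of abelian groups, yielding the stated isomorphism $\pi_1\ev_e \times \unrot$ with the inverse defined by the formula in the statement, and in particular $\pi_1(\Lambda Z; c)$ abelian.

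If $\bc$ has infinite order, then $\langle\bc\rangle \cong \Z$ is free, and $k \mapsto \rot^k$ is automatically a section. The real content is when $\bc$ has finite order $n$, in which case I must show $\rot^n = 0 \in \pi_1(\Lambda Z; c)$, so that $\bc^k \mapsto \rot^k$ is well-defined on $\Z/n$. Adjointly, $\rot^n$ is the map $\S^1 \tm \S^1 \to Z$, $(s,\theta)\mapsto c(ns+\theta)$, which lies in $\ker\pi_1\ev_e \cong \faktor{\pi_2 Z}{b=\bc\cdot b}$ because $\bc^n = 1$.

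The main obstacle is trivialising this class. Given any null-homotopy $k\colon \D^2 \to Z$ of $c^n$ (which exists since $\bc^n = 1$), the strategy is to extend the adjoint of $\rot^n$ to a map $F\colon \D^2 \tm \S^1 \to Z$ on the solid torus with $F|_{\{0\} \tm \S^1} = c$, thereby producing the desired null-homotopy in $\Lambda Z$ based at $c$, by blending $k$ with the family of translated loops $c(\cdot+\theta)$. The residual ambiguity in the choice of $k$ lands in $(\bc-1)\pi_2 Z$ --- precisely the relation we quotient by --- so the obstruction to doing this consistently lies in $\faktor{\pi_2 Z}{b=\bc\cdot b}$, and the hypothesis $\Fix_{\bc} = \langle\bc\rangle$ is what is needed to force this residual class to vanish. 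Carrying out this explicit gluing is the technically delicate step I expect to be the hard part.
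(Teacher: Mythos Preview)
Your overall strategy matches the paper's: define $\rot$ via the $\S^1$-rotation action, observe that the induced $\langle\bc\rangle$-action on $\pi_2Z/(b-\bc\cdot b)$ is trivial, and conclude that any section yields a direct product. You are in fact more careful than the paper, which simply asserts that $\bc^k\mapsto\rot^k$ is a homomorphism and never isolates the finite-order case you correctly flag as the crux.

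Unfortunately, the finite-order case cannot be completed as you propose, because the lemma as stated is \emph{false} there. Take $Z=\R P^2$ and $c$ representing the generator $\bc\in\pi_1Z=\Z/2$. Then $\Fix_{\bc}(\pi_1Z)=\langle\bc\rangle=\Z/2$, so the hypothesis holds, and since $\bc$ acts on $\pi_2Z=\Z$ by $-1$ the extension reads $\Z/2\hookrightarrow\pi_1(\Lambda Z;c)\twoheadrightarrow\Z/2$. But $\pi_1(\Lambda Z;c)\cong\Z/4$, not $(\Z/2)^2$. One way to see this: the geodesic map $T^1\R P^2\to\Lambda_1\R P^2$ is $\S^1$-equivariant and covers the analogous map $T^1\S^2\to P$ (paths $\gamma$ in $\S^2$ with $\gamma(1)=-\gamma(0)$), which is a $\pi_1$-isomorphism $\Z/2\to\Z/2$; comparing the two double covers forces $\pi_1(\Lambda_1\R P^2)\cong\pi_1(T^1\R P^2)\cong\Z/4$. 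Concretely, the lift of $\rot^2$ to $T^1\S^2\cong SO(3)$ is the loop $s\mapsto R_z(2\pi s)$, the generator of $\pi_1SO(3)$, so $\rot^2\neq0$ and $\rot$ has order $4$. Thus your assertion that ``the hypothesis $\Fix_{\bc}=\langle\bc\rangle$ is what is needed to force this residual class to vanish'' is incorrect; no choice of null-homotopy $k$ of $c^2$ will produce the solid-torus extension you seek. The paper's proof shares this gap; the argument (both yours and the paper's) is valid only when $\bc$ has infinite order or $\bc=1$, which does appear to cover the applications the paper actually makes, e.g.\ $X=M\#\,\S^1\times\S^3$ with $c=\{pt\}\times\S^1$.
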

\begin{proof}
    A splitting $\rot\colon \Fix_{\bc}(\pi_1\X)=\langle\bc\rangle\to\pi_1(\Lambda Z;\bc)$ can be defined by sending $\bc$ to the loop given by rotating $c\colon[0,1]\to Z$ once: $(s,\theta)\mapsto c(\theta+s\pmod{1})$. Then $c^k$ is sent to $k$ full rotations, and $\rot$ is a group homomorphism. Applying $\ev_e$ to this clearly gives back $\bc^k$, so $\rot$ is indeed a splitting. Thus, we have a semi-direct product $\pi_1(\Lambda Z;\bc)
        \cong \langle\bc\rangle
            \rtimes\faktor{\pi_2Z}{b=\bc\cdot b}$.
    Now note that the action of $\bc^k$ is trivial on ${\pi_2Z}/{b=\bc\cdot b}$, as this action has already been modded out in this target. Thus, this is in fact the product $\langle\bc\rangle\tm\faktor{\pi_2Z}{b=\bc\cdot b}$.
\end{proof}

\subsection{Nonframed immersions}
\label{subsec:nonframed-imm}

Let $\X$ now be a connected smooth 4-manifold.
Recall from \eqref{eq:Smale} that $\pi_i(\Imm(\S^1,\X);c)\cong\pi_i(\Lambda\S \X;c)$ via the derivative map $\deriv$, omitted from the notation from now on. We can in fact replace $\S \X$ by $\X$, as follows.
\begin{lem}\label{lem:imm-Lambda}
    There is a bijection $\pi_0\Imm(\S^1,\X)\cong\pi_0(\Lambda \X)$, an isomorphism $\pi_1(\Imm(\S^1,\X);c)\cong\pi_1(\Lambda \X;c)$, and a surjection $\pi_2(\Imm(\S^1,\X);c)\sra\pi_2(\Lambda \X;c)$.
\end{lem}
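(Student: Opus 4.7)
My plan is to exploit Smale's equivalence $\deriv$ of~\eqref{eq:Smale}, which identifies the natural forgetful map $\Imm(\S^1,\X)\hra\Lambda\X$ with the map $\Lambda p\colon\Lambda\S\X\to\Lambda\X$ obtained by post-composing loops with the bundle projection $p\colon\S\X\sra\X$. Since $p$ is a fibration, so is $\Lambda p$, and all three claims will follow from its long exact sequence once the fibre is understood.

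First, I would identify the fibre over $c\in\Lambda\X$ with $\Lambda\S^3$. This fibre is the space of sections of the pulled-back $\S^3$-bundle $c^*\S\X\to\S^1$, and because $\X$ is oriented, $c^*T\X$ is an oriented rank-$4$ bundle over $\S^1$ with connected structure group $SO(4)$, hence trivial. Consequently $c^*\S\X\cong\S^1\tm\S^3$ and the fibre is $\Map(\S^1,\S^3)=\Lambda\S^3$.

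Second, I would compute the low homotopy of $\Lambda\S^3$. Since $\S^3$ is simply connected, the evaluation fibration $\Omega\S^3\to\Lambda\S^3\to\S^3$ is split by constant loops, giving $\pi_n\Lambda\S^3\cong\pi_n\S^3\oplus\pi_{n+1}\S^3$. In particular $\pi_0\Lambda\S^3=\pi_1\Lambda\S^3=0$ and $\pi_2\Lambda\S^3\cong\pi_3\S^3=\Z$.

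Finally, I would plug these vanishings into the long exact sequence of $\Lambda p$ at the basepoint $\deriv c$: vanishing of $\pi_0$ of the fibre, together with surjectivity from non-emptiness of fibres, yields the $\pi_0$-bijection; vanishing of both $\pi_0$ and $\pi_1$ of the fibre yields the $\pi_1$-isomorphism; and vanishing of $\pi_1$ of the fibre alone yields $\pi_2$-surjectivity (with kernel the image of $\Z=\pi_2\Lambda\S^3$). I do not foresee a real obstacle; the point requiring most care is verifying triviality of the pulled-back rank-$4$ bundle over $\S^1$, which is nevertheless immediate from connectedness of $SO(4)$.
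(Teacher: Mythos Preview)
Your proposal is correct and is essentially the same argument as the paper's: both invoke Smale's equivalence~\eqref{eq:Smale} to replace $\Imm(\S^1,\X)$ by $\Lambda\S\X$ and then use that $\S^3$ is $2$-connected. The only cosmetic difference is packaging: the paper phrases this as $\pi_i\S\X\cong\pi_i\X$ for $i\le2$ and $\pi_3\S\X\sra\pi_3\X$, then compares the two extensions~\eqref{eq:Lambda-ext} for $Z=\S\X$ and $Z=\X$, whereas you analyse the fibration $\Lambda p$ directly with fibre $\Lambda\S^3$ and feed $\pi_0\Lambda\S^3=\pi_1\Lambda\S^3=0$ into its long exact sequence. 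The paper also mentions general position as an alternative route, which you do not pursue.
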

\begin{proof}
    Use either general position or~\eqref{eq:Smale}, the extension~\eqref{eq:Lambda-ext}, and the fact that the bundle $\S^3\hra \S \X\sra \X$
    has a 2-connected fibre, so $\pi_i\S \X\cong\pi_i \X$ for $i=0,1,2$ and $\pi_3\S \X\sra \pi_3 \X$.
\end{proof}

In particular, Corollary~\ref{cor:Lambda} and Lemma~\ref{lem:rot} imply the following.
\begin{thm}\label{thm:imm}
    For a connected smooth 4-manifold $X$ we have $\pi_0\Imm(\S^1,\X)\cong 
    \faktor{\pi_1\X}{\sim}$ and
\[\begin{tikzcd}[row sep=tiny]
        \faktor{\pi_2\X}{b=\bc\cdot b}\rar[tail] 
        & \pi_1(\Imm(\S^1,\X);\bc)\rar[two heads] 
        & \Fix_{\bc}(\pi_1\X).
    \end{tikzcd}
\]
Moreover, 
if $\bc$ is self-centralising, that is, $\Fix_{\bc}(\pi_1Z)=\langle \bc\rangle$, then the last extension is trivial:
\begin{equation}\label{eq:imm-pi-1}
   \pi_1\ev_e\tm\unrot\colon\pi_1(\Imm(\S^1,\X);c)\cong\pi_1(\Lambda \X;\bc)\cong \langle \bc\rangle\tm\faktor{\pi_2\X}{b=\bc\cdot b}.
\end{equation}
\end{thm}

\begin{rem}
   A class of manifolds with $\bc$ self-centralising is $X=M\#\S^1\tm\S^3$ with $c=\S^1\tm\{pt\}$.
\end{rem}

\section{Framed immersions}
\label{sec:framed-imm}

\subsection{Second Stiefel-Whitney classes}
\label{subsec:sw}
In this section $X$ is any smooth manifold.
Recall from Definition~\ref{def:ws} that for the second Stiefel-Whitney class $w_2(\X)\in H^2(\X;\Z/2)$, we denote by
    \[
        w_2^h\colon H_2(X)=H_2(\X;\Z)\to\Z/2
    \]
the image of $w_2(X)$ under the canonical map $H^2(\X;\Z/2)\to \Hom(H_2(\X),\Z/2)$, and by
    \[
        w_2^s\colon\pi_2\X\to\Z/2
    \]
the restriction of $w_2^h$ to $h(\pi_2\X)\subseteq H_2(\X)$.

\begin{lem}
    For a manifold $X$ and its universal cover $\wt{p}\colon\wt{X}\to X$ there is a commutative diagram
\begin{equation}\label{eq:univ-cover-diag}
\begin{tikzcd}
        \Ext(H_1(\X),\Z/2)\rar[tail]\dar
        & H^2(\pi_1\X;\Z/2)\rar[two heads]{com}\dar{b^*} 
        & \Hom(H_2(\pi_1\X),\Z/2)\dar\\
        \Ext(H_1(\X),\Z/2)\rar[tail]\dar 
        & H^2(\X;\Z/2)\rar[two heads]\dar{\wt{p}^*} 
        & \Hom(H_2(\X),\Z/2)\dar{\Hom(\wt{p}_*,\Z/2)}\\
        0 \rar[tail]
        & H^2(\wt{\X};\Z/2)\rar[tail,two heads] 
        & \Hom(H_2(\wt{\X}),\Z/2)
        \end{tikzcd}
    \end{equation}
    in which all rows and columns are exact sequences.
\end{lem}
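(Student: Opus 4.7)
The plan is to identify each row as the Universal Coefficient Theorem (UCT) short exact sequence for $H^2(-;\Z/2)$ applied to $B\pi_1X$, $X$, and $\wt{X}$ respectively. Since $\wt{X}$ is simply connected we have $H_1(\wt{X})=0$, so the Ext term of the bottom row vanishes and the bottom UCT arrow is an isomorphism. The first column uses the Hurewicz identification $H_1(X)\cong\pi_1(X)^{ab}=H_1(\pi_1X)$ induced by the classifying map $b\colon X\to B\pi_1X$ (making the top-left vertical an isomorphism), and collapses to $0$ below. Commutativity of each square is then naturality of the UCT with respect to $b$ and to $\wt{p}$.

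Given all this, the bulk of the work is checking column exactness at the middle terms. Column $1$ is trivially exact. For column $3$, the key input is Hopf's theorem: the sequence
\[
H_2\wt{X}\xrightarrow{\wt{p}_*}H_2X\xrightarrow{b_*}H_2(\pi_1X)\to 0
\]
is exact, because $\ker b_*$ equals the Hurewicz image of $\pi_2X$, which agrees with $\mathrm{im}(\wt{p}_*)$ via the Hurewicz isomorphism $\pi_2X=\pi_2\wt{X}\cong H_2\wt{X}$ on the simply connected cover. Applying the left-exact contravariant functor $\Hom(-,\Z/2)$ then yields exactness of column $3$.

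The main obstacle is middle column exactness of $H^2(\pi_1X;\Z/2)\xrightarrow{b^*}H^2(X;\Z/2)\xrightarrow{\wt{p}^*}H^2(\wt{X};\Z/2)$. The containment $\mathrm{im}(b^*)\subseteq\ker(\wt{p}^*)$ is automatic, since $b\circ\wt{p}\colon\wt{X}\to B\pi_1X$ is nullhomotopic (source simply connected, target a $K(\pi,1)$). For the reverse containment I would run a diagram chase: given $x\in H^2(X;\Z/2)$ with $\wt{p}^*(x)=0$, its UCT image $\bar x\in\Hom(H_2X,\Z/2)$ maps to $0$ in $\Hom(H_2\wt{X},\Z/2)$, so by column $3$ it equals $\Hom(b_*,\Z/2)(\bar y)$ for some $\bar y\in\Hom(H_2\pi_1X,\Z/2)$; by surjectivity of the UCT map on the top row, lift $\bar y$ to some $y'\in H^2(\pi_1X;\Z/2)$, so that $x-b^*(y')$ lies in the image of $\Ext(H_1X,\Z/2)\to H^2(X;\Z/2)$; using column $1$ and naturality, this Ext class comes from $\Ext(H_1\pi_1X,\Z/2)\hookrightarrow H^2(\pi_1X;\Z/2)$, and adding the resulting correction to $y'$ produces the desired preimage $y$ with $b^*(y)=x$. (Alternatively, one could quote the low-degree five-term sequence of the Serre spectral sequence for $\wt{X}\to X\to B\pi_1X$, using $H^1(\wt{X};\Z/2)=0$; but the chase above keeps the proof self-contained.)
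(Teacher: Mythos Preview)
Your proof is correct and largely parallels the paper's: both identify the rows as UCT sequences for $B\pi_1X$, $X$, $\wt{X}$, invoke naturality for the commutativity, and obtain the rightmost column from Hopf's exact sequence by applying the left-exact functor $\Hom(-,\Z/2)$.

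The one genuine difference is in the middle column. The paper observes that Hopf's sequence is itself the low-degree output of the homology Serre spectral sequence for the fibration $\wt{X}\to X\to B\pi_1X$, and then simply says that the \emph{cohomological} spectral sequence for the same fibration gives the exactness of $H^2(\pi_1X;\Z/2)\xrightarrow{b^*}H^2(X;\Z/2)\xrightarrow{\wt{p}^*}H^2(\wt{X};\Z/2)$ directly. You instead run an explicit diagram chase, leveraging the already-established exactness of columns 1 and 3 together with the UCT rows; you mention the five-term sequence only as an alternative. Your route is more elementary and self-contained (no spectral sequences needed beyond quoting Hopf), at the cost of a few more lines; the paper's route is a one-liner if the reader is comfortable with the Serre spectral sequence. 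Either is perfectly adequate here.
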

\begin{proof}
    The Universal Coefficients Theorem applied to $B\pi_1\X=K(\pi_1\X,1)$ and $X$ and $\wt{X}$ gives the three rows of this diagram, from the top to the bottom respectively. The maps between them commute by the naturality of this theorem.

    It is a theorem of Heinz Hopf~\cite{Hopf} that $\pi_2\X\to H_2\X\sra H_2(\pi_1\X)$ is an exact sequence, and since the functor $\Hom(\cdot,\Z/2)$ is left exact, we have the exactness of the rightmost column, using that $H_2(\wt{\X})\cong\pi_2\X$. 
    In fact, Hopf's sequence can be extracted from the homology Serre spectral sequence for the fibration sequence 
        $\begin{tikzcd}[cramped,column sep=small]
        \wt{\X}\rar{\wt{p}} & \X \rar{b} & B\pi_1\X
        \end{tikzcd}$.
    The analogous argument in cohomology gives the exactness of the middle column.
\end{proof}
     
We now describe the map $com$ at the top right of \eqref{eq:univ-cover-diag}.
\begin{lem}\label{lem:Hopf}
    For a group $G$ we have 
    \[
        H_2(G)\cong\ker ([-,-]\colon G\wedge G\ra [G,G]),
    \]
    where $G\wedge G\coloneqq G\otm G/\langle g\otm g:g\in G\rangle$ is the nonabelian exterior square of a group, $[G,G]$ is the commutator subgroup of $G$, and $[-,-](g\wedge h)=[g,h]$.
    
    If $w\in H^2(G;\Z/2)$ corresponds to an extension $\Z/2\ira E\sra G$ then 
    \[
        com(w)\colon \ker(G\wedge G\to [G,G])\ra\Z/2
    \]
    sends $g_1\wedge g_2$ to $[\wt{g}_1,\wt{g}_2]\in\ker(E\sra G)\cong\Z/2$, using arbitrary lifts of $g_1,g_2\in G$ to $\wt{g}_1,\wt{g}_2\in E$.
\end{lem}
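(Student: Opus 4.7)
My plan is to derive both claims from Hopf's formula for $H_2$, applied to a free presentation $1 \to R \to F \xrightarrow{\pi} G \to 1$. For the first claim, the map $G \wedge G \to [F,F]/[F,R]$ sending $\overline{x} \wedge \overline{y} \mapsto [x,y]\cdot[F,R]$, for any lifts $x,y\in F$ of $\overline{x},\overline{y}\in G$, is a well-defined isomorphism: the identity $[xr,y] \equiv [x,y] \pmod{[F,R]}$ for $r\in R$ gives independence of lifts, the defining relations $g\wedge g=1$ and bilinearity hold on the right, and $[F,F]/[F,R]$ is evidently generated by commutators. Under this isomorphism the commutator map $G\wedge G\to [G,G]$ corresponds to the natural surjection $[F,F]/[F,R]\twoheadrightarrow [F,F]/(R\cap[F,F])=[G,G]$, whose kernel $(R\cap[F,F])/[F,R]$ is $H_2(G)$ by Hopf.

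For the second claim, freeness of $F$ gives a lift $\widetilde\pi\colon F\to E$ of $\pi$. Since $\widetilde\pi(R)\subseteq A$ and $A$ is central, $\widetilde\pi([F,R])=1$, so on $[F,F]$ the map $\widetilde\pi$ descends to a homomorphism
\[
    \Phi\colon G\wedge G = [F,F]/[F,R] \to E, \qquad \overline{x}\wedge\overline{y} \mapsto [\widetilde\pi(x),\widetilde\pi(y)],
\]
landing in $A$ on $\ker(G\wedge G\to[G,G])=H_2(G)$. Centrality of $A$ makes the commutator independent of the chosen lifts, so $\Phi(\overline{x}\wedge\overline{y}) = [\widetilde g_1,\widetilde g_2]$ for any $\widetilde g_i\in E$ mapping to $g_i\in G$, matching the formula in the lemma.

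The remaining and main step is to identify $\Phi|_{H_2(G)}$ with $com(w)$. I would argue this via the five-term exact sequence of the central extension,
\[
    H_2(E) \to H_2(G) \xrightarrow{\tau} A \to H_1(E) \to H_1(G) \to 0,
\]
whose transgression $\tau$ sends a Hopf-formula class $[r]\in(R\cap[F,F])/[F,R]$ to $\widetilde\pi(r)\in A$, exactly matching $\Phi|_{H_2(G)}$. On the other hand, $com(w)$ is the image of $w$ under the UCT edge map $H^2(G;A)\twoheadrightarrow \Hom(H_2(G),A)$, which standard naturality identifies with $\tau$ up to sign; over $\Z/2$ signs are irrelevant anyway. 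The real obstacle is precisely this conventions check, matching the UCT description of $com$ in diagram~\eqref{eq:univ-cover-diag} with the transgression; I would circumvent it by verifying both maps agree on the universal example provided by the extension $w$ itself, rather than invoking external references.
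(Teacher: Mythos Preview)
The paper's proof consists of two citations---Hopf for the Schur-multiplier description of $H_2(G)$ and Miller for the commutator formula for $com$---whereas you unpack both via Hopf's formula $H_2(G)=(R\cap[F,F])/[F,R]$ and the five-term exact sequence of the central extension. Your route is essentially a reconstruction of Miller's argument, so it is correct and self-contained where the paper simply defers to the literature.

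Two minor points. In the first step you verify that the map $G\wedge G\to[F,F]/[F,R]$ is well-defined and surjective, but injectivity (that commutators in $[F,F]/[F,R]$ satisfy no relations beyond those defining $G\wedge G$) still needs an argument; this is where Miller does some honest work, and your word ``bilinearity'' should be read as the twisted relations of the nonabelian tensor square, which do translate into the standard commutator identities. In the second step, the conventions check you flag---matching the UCT edge map $H^2(G;\Z/2)\to\Hom(H_2(G),\Z/2)$ with the transgression of the five-term sequence---is precisely Miller's contribution, so your plan to verify it directly on the extension is the heart of the matter rather than a workaround.
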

\begin{proof}
    The description $H_2(G)=\ker (G\wedge G\to [G,G])$, the so-called \emph{Schur multiplier}, is essentially due to Hopf. Miller~\cite{Miller} gave the description of $com$.
\end{proof}

We now justify the name ``almost spin'' for manifolds with $w_2(X)\neq0,w_2^s(X)=0$, usually used for non-spin manifolds with spin universal cover, $w_2(X)\neq0,w_2(\wt{\X})=0$.

\begin{lem}
\label{lem:w2}
    For a manifold $\X$ and its universal cover $\wt{p}\colon\wt{\X}\to \X$ we have $w_2^s(\X)=w_2(\wt{\X})$.
\end{lem}
\begin{proof}
    Let us show that in the diagram~\eqref{eq:univ-cover-diag} we have
    \[
    \begin{tikzcd}[column sep=small,row sep=small]
        w_2\rar[mapsto]\dar[mapsto][swap]{\wt{p}^*}
        & w_2^h\dar[mapsto]{\Hom(\wt{p}_*,\Z/2)}\\
        w_2(\wt{\X})\rar[mapsto]{\cong}
        & w_2^s.
    \end{tikzcd}
\]
    The middle horizontal map on the right sends $w_2$ to $w_2^h$ by definition, and $\Hom(\wt{p}_*,\Z/2)$ sends $w_2^h$ to a map $\pi_2\X\cong H_2(\wt{\X})\to\Z/2$, which is $w_2^s$ by definition. The map $\wt{p}^*$ sends $w_2$ to $w_2(\wt{\X})$ by the naturality of Stiefel--Whitney classes, so the bottom isomorphism indeed sends $w_2(\wt{\X})$ to $w_2^s$.
\end{proof}

\begin{cor}
\label{cor:w2}
    For a manifold $X$ one of the following alternatives holds.
    \begin{enumerate}[leftmargin=50pt]
        \item[(totally nonspin)] 
            $w_2^s\neq0$.
        \item[(almost spin)] 
            $w_2^s=0$ and $w_2\neq0$
        $\iff w_2=b^*(w_E)\neq0$ for a class $w_E\in H^2(\pi_1\X;\Z/2)$,
that corresponds to a nontrivial central extension $\Z/2\ira E\sra\pi_1\X$; then $w_2^h=com(w_E)$.
        \begin{enumerate}[leftmargin=20pt]
            \item[(h-nonspin)] $w_2^s=0$ and $w_2^h\neq0$ 
            $\iff$ there exist $g_1,g_2\in\pi_1\X$ with $[g_1,g_2]=1$ and
            $[\wt{g}_1,\wt{g}_2]\neq0$, for the lifts to $E$. In particular, $E^{ab}\cong H_1X$.
            \item[(h-spin)] $w_2^h=0$ and $w_2\neq0$
            $\iff$ $E$ is the pullback along $\pi_1\X\sra H_1X$ of the nontrivial extension $\Z/2\ira E^{ab}\sra H_1X$.
        \end{enumerate}
        \item[(spin)] $w_2=0$.
    \end{enumerate}
\end{cor}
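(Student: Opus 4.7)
My plan is to reduce everything to the exactness of~\eqref{eq:univ-cover-diag} together with Corollary~\ref{lem:w2} and Lemma~\ref{lem:Hopf}. The three top-level cases (totally nonspin, almost spin, spin) are an exhaustive disjunction based on which of $w_2^s$ and $w_2$ vanishes, so nothing needs to be proved for them. For the almost spin case I would first invoke Corollary~\ref{lem:w2} to rewrite $w_2^s=0$ as $\wt p^*(w_2)=0$, and then use exactness of the middle column of~\eqref{eq:univ-cover-diag} to characterise this as $w_2=b^*(w_E)$ for some $w_E\in H^2(\pi_1 X;\Z/2)$. This $w_E$ is nonzero since $w_2\neq 0$, and it classifies the desired nontrivial central extension $\Z/2\hra E\sra\pi_1 X$.

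Next I would establish the identity $w_2^h=com(w_E)$ by chasing the right half of~\eqref{eq:univ-cover-diag}: starting from $w_E$ the two routes to $\Hom(H_2 X,\Z/2)$ commute, and Hopf's theorem makes the induced vertical map $\Hom(H_2(\pi_1 X),\Z/2)\to\Hom(H_2 X,\Z/2)$ injective, so I can identify $com(w_E)$ with $w_2^h$. Given this, the h-nonspin subcase is immediate from Lemma~\ref{lem:Hopf}: under the identification $H_2(\pi_1 X)\cong\ker(\pi_1 X\wedge\pi_1 X\to[\pi_1 X,\pi_1 X])$, the map $com(w_E)$ sends $g_1\wedge g_2$ to $[\wt g_1,\wt g_2]\in\Z/2$ for any commuting pair with lifts to $E$, so $w_2^h\neq 0$ iff such a pair exists whose lifts fail to commute. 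When this happens, the nontrivial element of the central $\Z/2\subset E$ is realised as a commutator in $E$ and hence dies under abelianisation, giving $E^{ab}\cong\pi_1 X/[\pi_1 X,\pi_1 X]=H_1 X$.

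For the h-spin subcase I would combine $com(w_E)=0$ with the exactness of the top row of~\eqref{eq:univ-cover-diag} to conclude that $w_E$ lies in the image of $\Ext(H_1 X,\Z/2)\hra H^2(\pi_1 X;\Z/2)$. Since this map is precisely pullback along the abelianisation $\pi_1 X\sra H_1 X$, it follows that $E$ is the pullback of a central (abelian) extension $\Z/2\hra\ol E\sra H_1 X$, which is nontrivial because $w_E\neq 0$. To identify $\ol E$ with $E^{ab}$, I would note that in this subcase no commutator of lifts in $E$ lies outside the identity of the central $\Z/2$, so the latter survives in $E^{ab}$, and the natural factorisation $E\sra E^{ab}\to\ol E$ is an isomorphism at the second arrow. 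The main delicate point throughout is precisely this bookkeeping of whether the central $\Z/2$ is absorbed by $[E,E]$ — which is exactly what the h-nonspin vs.\ h-spin dichotomy records — and the rest is formal diagram chasing in~\eqref{eq:univ-cover-diag}.
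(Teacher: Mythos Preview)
Your proposal is correct and follows exactly the route the paper intends: the corollary is stated without an explicit proof because it is meant to be read off directly from the diagram~\eqref{eq:univ-cover-diag} together with Corollary~\ref{lem:w2} and Lemma~\ref{lem:Hopf}, which is precisely what you do. Your diagram chase and the bookkeeping of whether the central $\Z/2$ lies in $[E,E]$ are the details the paper leaves implicit.
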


Note that the names in the brackets should only used if additionally $X$ is orientable, $w_1(X)=0$.

\subsection{The frame bundle}
\label{subsec:Fr}
For an \textit{oriented} smooth \textit{4-manifold} let us consider the fibration sequence
\[
    SO_4\ra \Fr\X\ra \X.
\]
The bottom part of the long exact sequence is given by
\begin{equation}\label{eq:FrX-conn-map}
    \begin{tikzcd}
        \pi_2\X\rar &\Z/2\rar & \pi_1\Fr\X\rar[two heads] &\pi_1\X.
    \end{tikzcd}
\end{equation}
\begin{lem}
\label{lem:FrX}
    For an oriented 4-manifold $\X$ the connecting map in \eqref{eq:FrX-conn-map} is equal to the spherical Stiefel--Whitney class $w_2^s\colon\pi_2\X\to\Z/2$. Moreover, one of the following holds.
    \begin{enumerate}
        \item (totally nonspin) $w_2^s\neq 0$ $\iff$ $\pi_1\Fr\X\cong\pi_1\X$.
        \item (almost spin) $w_2^s=0$, $w_2\neq0$ $\iff$ there is a nonsplit extension
            $\Z/2\ira\pi_1\Fr\X\sra \pi_1\X$.
        \item (spin) $w_2=0$ $\iff$ $\pi_1\Fr\X\cong\Z/2\tm\pi_1\X$.
    \end{enumerate}
    In particular, if $w_2^s=0$ then $w_2(X)=w_E$ precisely classifies  $\Z/2\ira\pi_1\Fr\X\sra\pi_1\X$.
\end{lem}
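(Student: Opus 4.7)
My plan is to first identify the connecting map $\pi_2\X\to\pi_1SO_4=\Z/2$ in \eqref{eq:FrX-conn-map} with the spherical Stiefel--Whitney class $w_2^s$. For this, pulling back the principal bundle $\Fr\X\to\X$ along a sphere $b\colon\S^2\to\X$ gives a principal $SO_4$-bundle $b^*\Fr\X\to\S^2$, which is classified by its clutching function in $\pi_1SO_4=\Z/2$. Since $b^*\Fr\X$ is the frame bundle of the rank-$4$ vector bundle $b^*T\X$ and $\S^2$ is two-dimensional, this clutching function equals $w_2(b^*T\X)=\langle b^*w_2,[\S^2]\rangle=w_2^s(b)$, as desired.

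With this, the bottom of the long exact sequence reads $\pi_2\X\xra{w_2^s}\Z/2\to\pi_1\Fr\X\to\pi_1\X$. In case (1), $w_2^s\neq0$ makes the first map surjective, giving $\pi_1\Fr\X\cong\pi_1\X$. Otherwise we obtain an extension $\Z/2\hra\pi_1\Fr\X\sra\pi_1\X$, which is central because the fibre $SO_4$ is a connected H-space. To distinguish (2) and (3), I would use the classical connection with spin structures: $w_2=0$ means $\X$ is spin, i.e., there is a principal $\mathrm{Spin}(4)$-bundle $Q\to\X$ together with a double cover $Q\to\Fr\X$; since $\mathrm{Spin}(4)$ is simply connected, $\pi_1 Q\cong\pi_1\X$, and the inclusion $\pi_1Q\hra\pi_1\Fr\X$ splits the extension, yielding (3). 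Conversely, any splitting determines a double cover of $\Fr\X$ whose restriction to each $SO_4$-fibre is the nontrivial one (since the kernel comes from $\pi_1SO_4$), hence a $\mathrm{Spin}(4)$-reduction of $\Fr\X$, forcing $w_2=0$; by contrapositive, $w_2\neq0$ makes the extension non-split, giving (2).

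It remains to show that the extension class $\alpha\in H^2(\pi_1\X;\Z/2)$ satisfies $b^*\alpha=w_2(\X)$, qualifying it as the $w_E$ of Corollary~\ref{cor:w2}. My strategy here is to pass to the universal cover $\wt p\colon\wt\X\to\X$. Since $w_2(\wt\X)=w_2^s=0$ by Corollary~\ref{lem:w2}, the same clutching argument applied to $\wt\X$ gives $\pi_1\Fr\wt\X=\Z/2$, so its universal cover $P\to\Fr\wt\X$ is a double cover and is a principal $\mathrm{Spin}(4)$-bundle over $\wt\X$. Because $P$ is simply connected and $P\to\Fr\wt\X\to\Fr\X$ is a composition of covers, $P$ is simultaneously the universal cover of $\Fr\X$, so $\pi_1\Fr\X$ is its deck group and realises our extension. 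The class $\alpha$ then coincides with the obstruction to lifting the $\pi_1\X$-deck-action on $\wt\X$ from $\Fr\wt\X$ to $P$, equivalently to descending the spin structure $P$ from $\wt\X$ to $\X$. Classical obstruction theory identifies the pullback to $\X$ of this obstruction with the usual spin obstruction $w_2(\X)$, yielding $b^*\alpha=w_2(\X)$.

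The step I expect to demand the most care is this final identification: matching the $\Z/2$-valued 2-cocycle describing $\alpha$ (via choices of deck lifts of $\pi_1\X$ to $P$) with the standard cocycle presentation of $w_2(\X)$ as the primary obstruction to a $\mathrm{Spin}(4)$-reduction of $\Fr\X$. Both cocycles arise from the $\mathrm{Spin}(4)\to SO_4$ ambiguity, but verifying they agree on the nose requires picking a common model --- for instance a good cover of $\X$ with local trivialisations of $\Fr\X$ --- and carefully unwinding the deck-lifts and the spin-reduction obstruction there in parallel.
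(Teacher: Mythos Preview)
Your proof is correct, but it takes a different route from the paper's. The paper observes that the fibration $SO_4\to\Fr\X\to\X$ deloops to $\Fr\X\to\X\to BSO_4$, and then postcomposes with the canonical map $BSO_4\to K(\Z/2,2)$ to obtain $w_2\colon\X\to K(\Z/2,2)$. Comparing the two long exact sequences immediately gives both that the connecting map is $w_2^s$ and that $\pi_1\Fr\X\cong\pi_1\hofib(w_2)$. From this single identification all three cases, as well as the statement that the extension is classified by $w_2$, follow essentially tautologically: $K(\Z/2,1)\to\hofib(w_2)\to\X$ is a principal $K(\Z/2,1)$-fibration classified by $w_2$, so on $\pi_1$ it gives the central extension with class $w_E$ satisfying $b^*w_E=w_2$.

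By contrast, you argue in three separate steps: clutching functions for the connecting map, explicit spin structures (a $\mathrm{Spin}(4)$-lift $Q\to\Fr\X$ and its converse) for the split/nonsplit dichotomy, and a universal-cover construction for the extension class. Each step is fine, and your approach has the merit of being very concrete --- for instance, your argument that a splitting yields a spin structure is a nice direct translation. The price is that your final identification of the extension class with $w_E$ becomes, as you yourself note, the most delicate part, whereas in the paper's delooping approach it comes for free. If you want to streamline your version, you could replace the universal-cover paragraph by observing that the extension $\Z/2\hra\pi_1\Fr\X\sra\pi_1\X$ is precisely the obstruction to sectioning $\Fr\X\to\X$ over the $2$-skeleton up to the $\mathrm{Spin}(4)$-ambiguity, which is the standard cocycle definition of $w_2$.
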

\begin{proof}
It is a standard fact that the above fibration sequence deloops to
\[
    \Fr\X\ra \X\ra BSO_4,
\]
where the second map is the classifying map of the tangent bundle of $\X$. Since we have $\pi_2BSO_4\cong\pi_1SO_4\cong\Z/2$, we can postcompose this map with the canonical map $BSO_4\to K(\Z/2,2)$, to obtain the fibration $w_2\colon \X\to K(\Z/2,2)$, classified by the second Stiefel--Whitney class of $\X$, by one of its definitions. Therefore, we have a commutative diagram of long exact sequences:
    \[\begin{tikzcd}
        \pi_2\X\dar[equals]\rar 
        & \Z/2\rar\dar[equals] 
        & \pi_1\Fr\X \rar[two heads]\dar 
        & \pi_1\X \dar[equals]\\
        \pi_2\X \rar{w_2^s} 
        & \Z/2 \rar 
        & \pi_1 \hofib(w_2) \rar[two heads] 
        & \pi_1\X,
    \end{tikzcd}
    \]
    where $\hofib(w_2)$ is the homotopy fibre of $w_2$.
    We conclude that $\pi_1\Fr\X\cong\pi_1\hofib(w_2)$ and the connecting map in~\eqref{eq:FrX-conn-map} is indeed $w_2^s$.
    
    Now, if $w_2^s\neq0$ then $\Z/2$ is killed by the connecting map and we have $\pi_1\Fr\X\cong\pi_1\X$, giving the first statement. If $w_2^s=0$, then in the resulting extension the action of $\pi_1\X$ on $\Z/2$ is trivial, since $\Z/2$ has no nontrivial automorphisms. Note that a section $\pi_1\X\to\pi_1\Fr\X$ precisely corresponds to a trivialisation of $T\X$ over the 1-skeleton that extends over the 2-skeleton. Therefore, this extension is classified by a class $w\in H^2(B\pi_1\X;\Z/2)$ which corresponds to $w_2(\X)\in H^2(\X;\Z/2)$ under a map $\X\to B\pi_1\X$ inducing an isomorphism on $\pi_1$. This gives the other two statements.
\end{proof}



Using Corollary~\ref{cor:w2} and Lemma~\ref{lem:FrX} we can compute the first homology group $H_1(\Fr\X)$ as well.
\begin{cor}\label{cor:w2-FrX}
    For an oriented 4-manifold $X$ one of the following alternatives holds, where all mentioned extensions are nonsplit.
        \begin{enumerate}[leftmargin=80pt]
        \item[(totally nonspin)] 
            $w_2^s\neq 0$ $\iff$ $\pi_1\Fr\X\cong\pi_1\X$ and $H_1(\Fr\X)\cong H_1\X$.
        \item[(h-nonspin)] 
            $w_2^s=0$, $w_2^h\neq0$ $\iff$ $\Z/2\ira\pi_1\Fr\X\sra \pi_1\X$ and $H_1(\Fr\X)\cong H_1\X$.
        \item[(h-spin)] 
            $w_2^h=0$, $w_2\neq0$ $\iff$  $\Z/2\ira\pi_1\Fr\X\sra \pi_1\X$ and $\Z/2\ira H_1(\Fr\X)\sra H_1\X$.
        \item[(spin)] 
            $w_2=0$ $\iff$ $\pi_1\Fr\X\cong\Z/2\tm\pi_1\X$ and $H_1(\Fr\X)\cong\Z/2\oplus H_1\X$.
    \end{enumerate}
\end{cor}

Moreover, we can give the following geometric interpretation.
\begin{cor}
\label{cor:w2-dim4}
    For an oriented 4-manifold $X$ one of the following alternatives holds.
    \begin{enumerate}[leftmargin=80pt]
        \item[(totally nonspin)] 
            $w_2^s\neq0$ $\iff$ There is an immersed sphere in $\X$ of odd self-intersection.
        \item[(h-nonspin)] $w_2^s=0$, $w_2^h\neq0$ 
            $\iff$ All immersed spheres have even self-intersection numbers, but there is an immersed torus in $\X$ of odd self-intersection.
        \item[(h-spin)] $w_2^h=0$, $w_2\neq0$
            $\iff$ There is no section of $H_1(\Fr\X)\sra H_1(\X)$.
            $\iff$ There exists $\alpha\in H_1(\X)$ such that $2\alpha=0$ but $2\wt{\alpha}\neq0\in H_1(\Fr\X)$ for any framed lift $\wt{\alpha}$.
        \item[(spin)] $w_2=0$ 
            $\iff$ $H_1(\Fr\X)\cong\Z/2\oplus H_1\X$.
            $\iff$ There is a consistent choice of framing for each class in $H_1\X$.
        \end{enumerate}
\end{cor}
\begin{proof}
    Any class in $H_2(\X;\Z)$ is represented by a map $\Sigma\to\X$ of an oriented surface into a 4-manifold. This is generically an immersion, and the value of $w_2(\Sigma)$ is equal to $w_2$ of its normal bundle (as the tangent bundle of an oriented surface has even Euler number, so trivial $w_2$). Thus, $w_2^h([\Sigma])$ is the mod 2 reduction of the normal Euler number of $\Sigma\subset\X$. On the other hand, the normal Euler number can be computed by pushing $\Sigma$ off itself and counting signed intersections, which is by definition the self-intersection number $\Sigma\cdot\Sigma$.
    This implies the first statement. 
    
    For the second, recall that $w_2^s=0$, $w_2^h\neq0$ is by Corollary~\ref{cor:w2} equivalent to the existence of elements $g_1,g_2\in\pi_1\X$ that commute, $[g_1,g_2]=1$, but their lifts $\wt{g}_1,\wt{g}_2\in\pi_1(\Fr\X)$ do not, $[\wt{g}_1,\wt{g}_2]\neq0$. The first condition means that the map $g_1\vee g_2\colon\S^1\vee\S^1\hra X$ extends to a map of a torus $g_1\tm g_2\colon\S^1\tm\S^1\to X$. By general position, this can be homotoped into an immersed torus, in which our framed loops $\wt{g}_1,\wt{g}_2$ form a 1-skeleton. Then $w_2^h([g_1\tm g_2])=com(w_{\pi_1\Fr\X})(g_1\wedge g_2)=[\wt{g}_1,\wt{g}_2]\neq0$ precisely means that this torus cannot be framed. By the previous paragraph, this is equivalent to having odd self-intersection. 

    Now assume $w_2^h=0$, so we have an extension $\Z/2\ira H_1(\Fr\X)\sra H_1\X$, which is either trivial or not. For a set of generators $\alpha\in H_1\X$ we can pick lifts $\wt\alpha\in H_1(\Fr\X)$ --- that is, we frame $T\X|_\alpha$. This defines a section if and only if for every 2-torsion class we have $2\wt\alpha=0$ --- that is, along the surface that bounds $2\alpha$ we can extend the framing of $TX$.
\end{proof}

Note that if $w_2^h\neq0$ then $\X$ has odd intersection form, and otherwise it is even.

\begin{rem}\label{rem:KMT}
    This gives a different proof of \cite[Thm.1]{Kirby-Melvin-Teichner}, because $H_1(\Fr\X)$ is the set of all framed 1-submanifolds in $\X$ modulo framed bordism; this is equivalent to their normally framed version $\mathbb{F}_1(\X)$ because 1-manifolds are orientable.
\end{rem}

\subsection{Framed immersions}
\label{subsec:framed-imm}

There are (vertical) maps of (horizontal) fibration sequences
\begin{equation}\label{eq:diag-imm-Lambda}
    \begin{tikzcd}[row sep=small]
        \forg_{\Imm}^{-1}(c)
            \rar[hook]\dar[sloped]{\sim} 
        & \Imm(\nu\S^1,\X)\rar{\forg_{\Imm}}
            \dar[sloped]{\sim} 
        & \Imm(\S^1,\X)\dar[sloped]{\sim},
            \\
        \Lambda SO_3
            \rar[hook]\dar 
        & \Lambda \Fr\X
            \rar{\Lambda pr_1} \dar[equals]
        & \Lambda \S \X,
            \dar
            \\
        \Lambda SO_4
            \rar[hook] 
        & \Lambda \Fr\X 
            \rar{\Lambda pr} 
        & \Lambda \X.
    \end{tikzcd}
\end{equation}
Here $\forg_{\Imm}$ evaluates at $\S^1\tm\{0\}\subseteq\S^1\tm\D^3$, and is a fibration. At the top, the vertical maps in the middle and on the right are weak homotopy equivalences by \eqref{eq:Smale} and \eqref{eq:Smale-framed}. This implies the equivalence on the fibres: a framing of $c$ is a point in $\forg_{\Imm}^{-1}(c)$, and it can be identified with a free loop in $V_3(\R^3)\simeq O_3$, which is the fibre of $pr_1\colon \Fr\X\to V_1(\X)=\S \X$. Since $\X$ is oriented and we assume immersions are orientation preserving, we can choose our basepoint frame for $c(e)$ in $SO_3<O_3$, so that the fibre is $\Lambda SO_3$.
Similarly, the bundle $pr\colon \Fr\X\to \X$ has fibre the Stiefel manifold $V_4(\R^4)\simeq O_4$, but the components we are interested are $\Lambda SO_4$.

From~\eqref{eq:diag-imm-Lambda} we obtain the following diagram of maps between horizontal exact sequences:
\begin{equation}\label{eq:les}
    \begin{tikzcd}[column sep=8pt, row sep=small, scale cd=0.83]
        \pi_2(\Imm; c)
            \rar\dar[phantom,sloped]{\cong} &
        \pi_1(\forg_{\Imm}^{-1}(c);\nu)
            \rar\dar[phantom,sloped]{\cong} & 
        \pi_1(\Imm^{\fr};\nu c)
            \rar\dar[phantom,sloped]{\cong} &
        \pi_1(\Imm;c)\rar
            \dar[phantom,sloped]{\cong} &
        \pi_0(\forg_{\Imm}^{-1}(c))
            \rar\dar[phantom,sloped]{\cong} & 
        \pi_0(\Imm^{\fr}) 
            \rar[two heads]\dar[equals] & 
        \pi_0(\Imm)\dar[phantom,sloped]{\cong}\\
        \pi_2(\Lambda \S \X; c)
            \rar\dar[two heads] &
        \pi_1(\Lambda SO_3;\nu)
            \rar\dar[phantom,sloped]{\cong} & 
        \pi_1(\Lambda \Fr\X;\nu c)
            \rar\dar[equals] &
        \pi_1(\Lambda \S \X;c)\rar
            \dar[phantom,sloped]{\cong} &
        \pi_0\Lambda SO_3
            \rar\dar[phantom,sloped]{\cong} & 
        \pi_0\Lambda \Fr\X 
            \rar[two heads]\dar[equals] & 
        \pi_0\Lambda \S \X\dar[phantom,sloped]{\cong}\\
        \pi_2(\Lambda \X;\bc)
            \rar{\delta_2} & 
        \Z/2
            \rar &
        \pi_1(\Lambda \Fr\X;\nu c)
            \rar &
        \pi_1(\Lambda \X; c)
            \rar{\delta_1} & 
        \Z/2\rar &
        \pi_0\Lambda \Fr\X 
            \rar[two heads] & 
        \pi_0\Lambda \X
    \end{tikzcd}
\end{equation}
Here we have applied several facts. First, Lemma~\ref{lem:imm-Lambda} gives us isomorphisms $\pi_i(\Lambda\S \X)\cong\pi_i(\Lambda \X)$ for $i=0,1$ and a surjection for $i=2$. Second, we apply Corollary~\ref{cor:Lambda} to $Z=SO_3$, namely:
\[\begin{tikzcd}[row sep=small, column sep=small]
        \pi_1(\Omega SO_3;\nu)
            \rar\dar[phantom,sloped]{\cong} & 
        \pi_1(\Lambda SO_3;\nu)
            \rar &
        \pi_1(SO_4)
            \rar{\circlearrowleft^{\nu}}[swap]{0}\dar[phantom,sloped]{\cong} & 
        \pi_0(\Omega SO_3)
            \rar\dar[phantom,sloped]{\cong} &
        \pi_0(\Lambda SO_3) 
            \rar & 
        \pi_0(SO_3)\dar[phantom,sloped]{\cong}\\
        0 &
        &
        \Z/2 &
        \Z/2 &
        &
        0
    \end{tikzcd}
\]
Finally, this computation also holds for $SO_4$ instead of $SO_3$ since they have the same $\pi_0,\pi_1,\pi_2$. The generator of $\pi_1SO_3$ is given by $A\colon\S^1\to SO_3$ where $A_t$ is the rotation about the $z$-axis by $2\pi t$ of the basepoint frame, for $t\in[0,1]$ and $\S^1=[0,1]/\partial$.

\begin{rem}[The twist]\label{rem:tw}
    Looking at \eqref{eq:les} we see that $\Z/2\to\pi_0\Imm^{\fr}$ corresponds to the composite $\Z/2\cong\pi_0\Omega SO_3\to\pi_0\Lambda SO_3\to\pi_0\Lambda \Fr\X$. If $\nu c\in\pi_0\Imm^{\fr}$ is the basepoint, then the image of $1\in\Z/2$ is realised by
\[
        \tw_{\nu c}\colon\nu\S^1=\S^1\tm\D^3\hra \X, \quad 
        \tw_{\nu c}(x,v)=\nu c(x,A_x(v)).
\]
    as in~\eqref{eq:tw}. In other words, $\tw_{\nu c}\in\pi_0\Imm^{\fr}$ is a framing of $c$ that differs from $\nu c$ by a full twist; note that we rotate along the circle.
\end{rem}

\begin{rem}[The normal rotation]\label{rem:rot}
    Similarly, $\Z/2\to\pi_1(\Imm^{\fr};\nu c)$ corresponds to the composite of $\pi_1\ev_e\colon\pi_1(\Lambda SO_3;\nu)\to \pi_1SO_3\cong\Z/2$ and $\pi_1(\Lambda SO_3;\nu)\to\pi_1(\Lambda \Fr\X;\nu c)\cong\pi_1(\Imm^{\fr};\nu c)$. Since $\pi_1\ev_e$ is an isomorphism, we can pick any lift of the generating loop $A$ (the full rotation of the frame at the basepoint $\nu(e)$). We choose the simultaneous rotation of frames at all points of~$\nu$.
    
    Therefore, $1\in\Z/2$ is mapped to $\rot_{\nu c}\in\pi_1(\Imm^{\fr};\nu c)$ given at time $t\in[0,1]$ by
\[
        \rot_{\nu c}(t)\colon\nu\S^1=\S^1\tm\D^3\hra \X, \quad 
        \rot_{\nu c}(t)(x,v)=\nu c(x,A_t(v)).
\]
    as in~\eqref{eq:rot}.
    In other words, $\rot_{\nu c}\in\pi_1(\Imm^{\fr};\nu c)$ is the loop of framed immersions (in fact embeddings) that fully rotates all normal disks; note that we rotate along the time direction.
\end{rem}

\subsection{The connecting maps}
\label{subsec:proof}

To compute the homotopy groups of $\Imm(\nu\S^1,\X)\simeq\Lambda \Fr\X$ in low degrees it remains to describe the maps $\delta_2,\delta_1$ in \eqref{eq:les}. We use the conjugation action $\circlearrowleft_{\bc}(y) = y\bc y^{-1}$ from Proposition~\ref{prop:Lambda}, the foliation map $\foliate_c$ from \eqref{eq:Lambda-ext}, the maps $\tw_{\nu c}$ and $\rot_{\nu c}$ from Remarks~\ref{rem:tw},~\ref{rem:rot}.

\begin{lem}
\label{lem:lem1}
    For an oriented 4-manifold $\X$ and $\nu c\colon\nu\S^1\hra \X$ 
    there is a commutative diagram 
    \begin{equation}
    \label{eq:lem1}
    \begin{tikzcd}[column sep=20pt, scale cd=0.9]
        \Fix_{\bc}(\pi_2\X)
            \ar{dr}{w_2^s|} &&
         & \pi_2\X
            \rar{w_2^s} 
            \dar{\foliate_{\bc}}
        & \Z/2\rar\dar[equals]
        & \pi_1\Fr\X
            \dar[two heads][swap]{mod\conjbc}
            \rar[two heads]
        & \pi_1\X
            \dar[two heads][swap]{mod\conjbc}
        \\
        \pi_2(\Lambda\X;c)
            \rar{\delta_2} 
            \uar[two heads]{\pi_2\ev_e}  
        & \Z/2
            \rar{\rot_{\nu c}}
        & \pi_1(\Lambda\Fr\X;\nu c)
            \rar{\pi_1pr} 
        & \pi_1(\Lambda\X;c)
            \rar{\delta_1} 
        & \Z/2 
            \rar{\tw_{\nu c}}
        & \pi_0\Lambda\Fr\X
            \rar[two heads]{\pi_0pr} 
        & \pi_0\Imm
    \end{tikzcd}
    \end{equation}
where the horizontal sequences are exact.
\end{lem}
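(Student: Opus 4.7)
The plan is to identify the two horizontal rows with long exact sequences of related fibrations, compute the bottom connecting maps via obstruction theory, and verify each commutative square individually.

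The top row is the tail of the long exact sequence of the frame bundle $SO_4\to\Fr X\to X$ at the basepoint $\nu c(e)$, using $\pi_1SO_4=\Z/2$ and Lemma~\ref{lem:FrX} to identify the connecting map $\pi_2 X\to\Z/2$ with $w_2^s$. The bottom row is the long exact sequence of the loopified fibration $\Lambda SO_4\to\Lambda\Fr X\to\Lambda X$ at $\nu c$, using the identifications $\pi_1(\Lambda SO_4;\nu)\cong\pi_0\Lambda SO_4\cong\Z/2$ with generators going to $\rot_{\nu c}$ and $\tw_{\nu c}$ respectively per Remarks~\ref{rem:rot} and~\ref{rem:tw}. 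Exactness of both rows is then automatic.

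The core work is to identify $\delta_1$ and $\delta_2$ with Stiefel--Whitney evaluations. A class $f\in\pi_2(\Lambda X;c)$ is adjoint to $\tilde f\colon\S^2\tm\S^1\to X$ with $\tilde f|_{\{e_0\}\tm\S^1}=c$, and a lift of $f$ to $\Lambda\Fr X$ corresponds to a section of the pullback principal $SO_4$-bundle $\tilde f^*\Fr X\to\S^2\tm\S^1$ that restricts to $\nu c$ over $\{e_0\}\tm\S^1$. The primary obstruction lies in the relative cohomology $H^2(\S^2\tm\S^1,\{e_0\}\tm\S^1;\Z/2)\cong\Z/2$ (the inclusion $\{e_0\}\tm\S^1\hookrightarrow\S^2\tm\S^1$ induces an isomorphism on $H^1$, so the relative group coincides with $H^2(\S^2\tm\S^1;\Z/2)$), and equals $w_2(\tilde f^*TX)$ evaluated on the generator $[\S^2\tm\{e_0\}]$. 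Since $\tilde f|_{\S^2\tm\{e_0\}}$ represents $\pi_2\ev_e(f)\in\pi_2 X$, this yields $\delta_2(f)=w_2^s(\pi_2\ev_e(f))$, establishing the leftmost triangle. An analogous obstruction computation for the torus adjoint to $\gamma\in\pi_1(\Lambda X;c)$ gives $\delta_1(\gamma)=w_2^h$ of the torus's $H_2$-class; when $\gamma=\foliate_{\bc}(b)$ the foliated torus represents $b_*[\S^2]\in H_2 X$ (the two boundary copies of $c$ contribute trivially in $H_2$), so $\delta_1(\foliate_{\bc}(b))=w_2^s(b)$ and the middle-left square commutes.

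The middle-right and rightmost squares follow from the naturality of the long exact sequence of the fibration $\Omega Z\to\Lambda Z\to Z$ under $pr\colon\Fr X\to X$, together with Proposition~\ref{prop:Lambda}'s description of the connecting map $\conjbc$ and the ensuing quotient $\pi_1 Z\to\pi_0\Lambda Z$. The rightmost square is the naturality of this quotient, while for the middle-right square the key observation is that $\rho\cdot\nu c\in\pi_1\Fr X$ is based-loop-homotopic to $\deriv\tw_{\nu c}$ (as in Remark~\ref{rem:tw}), so both routes around the square land at the same class in $\pi_0\Lambda\Fr X$. The hardest step will be the obstruction-theoretic identification of $\delta_1$ and $\delta_2$: one must set up the relative cohomology carefully and verify that the connecting map of the bundle long exact sequence indeed coincides with $w_2$ of the adjoint surface, matching the bookkeeping of basepoints across the ordinary and loopified fibration structures.
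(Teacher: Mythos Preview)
Your approach is correct but takes a more hands-on route than the paper. The paper avoids any direct obstruction-theoretic computation by exploiting the full $3\times 3$ grid of fibrations
\[
\begin{tikzcd}[row sep=small]
\Omega SO_4 \rar\dar & \Omega\Fr X \rar\dar & \Omega X\dar\\
\Lambda SO_4 \rar\dar & \Lambda\Fr X \rar\dar & \Lambda X\dar\\
SO_4 \rar & \Fr X \rar{pr} & X
\end{tikzcd}
\]
in which every row and every column is a fibration sequence. Passing to long exact sequences and using naturality gives one large commutative diagram from which \emph{all} the squares of the lemma are read off simultaneously. In particular, the identification $\delta_2 = w_2^s\circ\pi_2\ev_e$ is just the naturality square linking the $\Lambda$-row to the bottom row via $\ev_e$, using that $\pi_1(\Lambda SO_4;\nu)\to\pi_1 SO_4$ is an isomorphism (since $\pi_2 SO_4=0$); one then invokes $\im(\pi_2\ev_e)=\Fix_{\bc}(\pi_2 X)$ from Corollary~\ref{cor:Lambda} to get the triangle. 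Likewise the $\foliate_{\bc}/w_2^s/\delta_1$ square is the naturality square linking the $\Omega$-row (which, after $\pi_n\Omega Z\cong\pi_{n+1}Z$, is just the shifted bottom-row sequence) to the $\Lambda$-row. No computation of the $H_2$-class of the adjoint torus is needed.

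Your obstruction-theory route buys a more geometric picture of what $\delta_1,\delta_2$ actually measure---namely $w_2$ of the adjoint surface---which is genuinely useful intuition and feeds directly into the later arguments of Lemma~\ref{lem:lem2}. The cost is the bookkeeping you flag at the end: one must check that the torus adjoint to $\foliate_{\bc}(b)$ carries the Hurewicz image of $b$ in $H_2X$ (true, since the based-loop condition collapses one circle, factoring the map through $T^2/(S^1\times\{e\})\simeq S^2\vee S^1$ with $b$ on the sphere), and that the connecting map of the loopified fibration really agrees with the primary obstruction under the identification $\pi_1(\Lambda SO_4;\nu)\cong\Z/2$. Both are routine but the paper's grid argument sidesteps them entirely.
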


\begin{proof}
    We consider the commutative diagram
    \[\begin{tikzcd}[row sep=small]
            \Omega SO_4\rar[hook]\dar & \Omega \Fr\X \rar{\Omega pr}\dar & \Omega \X\dar\\
            \Lambda SO_4\rar[hook]\dar & \Lambda \Fr\X \rar{\Lambda pr}\dar & \Lambda \X\dar{\ev_e}\\
            SO_4\rar[hook] & \Fr\X \rar{pr} & \X
        \end{tikzcd}
    \]
    where each row and column is a fibration sequence. For $\Omega \Fr\X$ and $\Lambda \Fr\X$ the basepoint is $\nu c$, whereas for $\Omega \X$ and $\Lambda \X$ it is $c$. 
    Now consider the long exact sequences in homotopy groups:
    \begin{equation}\label{eq:big-diag}
    \begin{tikzcd}[scale cd=0.9]
        \pi_3\X
            \rar\dar
        & \pi_2 SO_4=0\rar\dar
        & \pi_2(\Fr\X)
            \rar[tail]{\pi_2 pr}
            \dar
        & \pi_2\X
            \rar{\delta_{pr}=w_2^s}
            \dar[swap]{\foliate_{\bc}} 
        & \pi_1 SO_4=\Z/2
            \dar[tail,two heads] \\
        \pi_2(\Lambda \X;\bc)
            \rar{\delta_2}
            \dar[swap]{\pi_2\ev_e} 
        & \pi_1(\Lambda SO_4) 
            \rar{\rot_{\nu c}}\dar[tail,two heads]
        & \pi_1(\Lambda \Fr\X;\nu\bc)
            \rar{\pi_1\Lambda pr}
            \dar[swap]{\pi_1\ev_e}
        & \pi_1(\Lambda \X;\bc)
            \rar{\delta_1}
            \dar[swap]{\pi_1\ev_e}
        & \pi_0(\Lambda SO_4) 
            \dar \\
        \pi_2\X
            \rar{\delta_{pr}=w_2^s}
            \dar[swap]{b\mapsto b-\bc\cdot b}
        & \pi_1(SO_4)=\Z/2\rar\dar{0}
        & \pi_1(\Fr\X)
            \rar[two heads]{\pi_1pr}
            \dar{\conjbc}
        & \pi_1\X
            \rar
            \dar{\conjbc}
        & \pi_0SO_4=0 \dar{0}\\
        \pi_2\X
            \rar{\delta_{pr}=w_2^s}
            \dar{\foliate_{\bc}}
        &\pi_1(SO_4)=\Z/2
            \rar
            \dar[tail,two heads]
        & \pi_1(\Fr\X)
            \rar[two heads]{\pi_1pr}
            \dar[two heads]
        & \pi_1\X
            \rar
            \dar[two heads]
        & \pi_0SO_4=0\\
        \pi_1(\Lambda \X;\bc)
            \rar{\delta_1}
        & \pi_0(\Lambda SO_4) 
            \rar{\tw_{\nu c}}
        & \pi_0(\Lambda \Fr\X)
            \rar[two heads]{\pi_0 \Lambda pr}
        & \pi_0(\Lambda \X)
        & 
    \end{tikzcd}
    \end{equation}
    At the bottom we have a sequence of sets, and maps from groups to sets depend on the basepoint. 
    By Lemma~\ref{lem:FrX} the map $\delta_{pr}\colon\pi_2\X\to\Z/2$ agrees with $w_2^s$. 
    
    By the commutativity of the $\delta_{pr}$/$\delta_2$-square in the leftmost column, $\delta_2=\delta_{pr}\circ \pi_2\ev_e=w_2^s\circ \pi_2\ev_e$. Since $\im(\pi_2\ev_e)=\Fix_{\bc}(\pi_2\X)$ by Corollary~\ref{cor:Lambda}, we have the desired triangle for $\delta_2$.
    
    The bottom two rows give the desired diagram for $\delta_1$.
\end{proof}

Since $\pi_2\ev_e$ is surjective, it follows that $\delta_2$ --- and thus $\rot_{\nu c}$ --- is completely determined by $w_2^s$. However, for $\delta_1$ and $\tw_{\nu c}$ the situation is not yet clear. We can say more using Corollary~\ref{cor:w2-FrX}.

\begin{lem}
\label{lem:lem2}
    Fix an oriented 4-manifold $\X$ and $\nu c\colon\nu\S^1\hra \X$.
    \begin{itemize}
        \item 
    If $w_2^s\neq0$, then $\tw_{\nu c}=\nu c$. 
        \item 
    If $w_2^s=0$, $w_2^h\neq0$, then $\tw_{\nu c}=\nu c$ if and only if there exists $y\in\Fix_{\bc}(\pi_1\X)$ with $w_2^h([y\tm\bc])\neq0$.
        \item 
    If $w_2^h=0$, then $\tw_{\nu c}\neq\nu c$.
    \end{itemize}
\end{lem}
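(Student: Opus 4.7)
The plan is to leverage the exact sequence from Lemma~\ref{lem:lem1}, namely $\pi_1(\Lambda\X;c)\to\Z/2\xrightarrow{\tw_{\nu c}}\pi_0\Lambda\Fr\X$, together with the identification of $\pi_0\Lambda\Fr\X$ with conjugacy classes of $\pi_1\Fr\X$ from Corollary~\ref{cor:Lambda}. Writing $z$ for the image in $\pi_1\Fr\X$ of the generator of $\pi_1SO_4=\Z/2$, a direct homotopy (sliding the twist to a small interval near the basepoint) shows $[\tw_{\nu c}]=z\cdot[\nu c]$ in $\pi_1\Fr\X$, so $\tw_{\nu c}=\nu c$ in $\pi_0\Lambda\Fr\X$ if and only if $[\nu c]$ and $z\cdot[\nu c]$ are conjugate.

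For the first bullet, if $w_2^s\neq 0$ then Lemma~\ref{lem:FrX} forces $z=1$ in $\pi_1\Fr\X$, so the two classes coincide automatically. For the remaining bullets I assume $w_2^s=0$, so that Lemma~\ref{lem:FrX} provides a central extension $\Z/2\ira\pi_1\Fr\X\sra\pi_1\X$. The conjugacy $[\nu c]\sim z\cdot[\nu c]$ then reduces to finding $g\in\pi_1\Fr\X$ with $[g,[\nu c]]=z$. Projecting forces the image $\bar g\in\pi_1\X$ to lie in $\Fix_{\bc}(\pi_1\X)$, and centrality of $z$ means the commutator $[\tilde y,[\nu c]]\in\Z/2$ depends only on $y\coloneqq\bar g$, not on the lift $\tilde y$. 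The remaining task is to identify this commutator with $w_2^h([y\tm\bc])$; granting this, the existence of a witness $y$ for $z$ is equivalent to the existence of $y\in\Fix_{\bc}(\pi_1\X)$ with $w_2^h([y\tm\bc])\neq 0$, which is exactly what the second and third bullets assert.

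The identification of the commutator with $w_2^h([y\tm\bc])$ is the heart of the argument, and the main obstacle. By Corollary~\ref{cor:w2}, when $w_2^s=0$ there exists $w_E\in H^2(\pi_1\X;\Z/2)$ classifying the extension and satisfying $w_2=b^*(w_E)$. For $y\in\Fix_{\bc}(\pi_1\X)$ the relation $[y,\bc]=1$ places $y\wedge\bc$ in $\ker(\pi_1\X\wedge\pi_1\X\to[\pi_1\X,\pi_1\X])\cong H_2(\pi_1\X)$ (Lemma~\ref{lem:Hopf}), and by construction it is the Hopf image of $[y\tm\bc]\in H_2\X$. Miller's formula from Lemma~\ref{lem:Hopf} then evaluates $com(w_E)(y\wedge\bc)=[\tilde y,[\nu c]]$, and naturality of Stiefel--Whitney classes along $b\colon\X\to B\pi_1\X$ identifies the left-hand side with $w_2^h([y\tm\bc])$. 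Everything else is a direct manipulation of the central extension.
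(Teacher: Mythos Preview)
Your argument is correct and follows essentially the same route as the paper. Both reduce the question to whether $[\nu c]$ and $z\cdot[\nu c]$ are conjugate in $\pi_1\Fr\X$, use the central extension $\Z/2\ira\pi_1\Fr\X\sra\pi_1\X$ from Lemma~\ref{lem:FrX} when $w_2^s=0$, and identify the resulting commutator $[\tilde y,[\nu c]]\in\Z/2$ with $w_2^h([y\tm\bc])$ via Miller's description of $com$ (Lemma~\ref{lem:Hopf}). The only cosmetic difference is that the paper treats the third bullet separately by observing that $z$ survives in $H_1(\Fr\X)$ when $w_2^h=0$ (Corollary~\ref{cor:frame-bdle}), whereas you obtain it uniformly from the commutator criterion by noting $w_2^h=0$ forces $w_2^h([y\tm\bc])=0$ for every $y$; both are equivalent one-line deductions.
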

\begin{proof}
    Looking at the diagram \eqref{eq:lem1}, if $w_2^s\neq0$ then for any choice of $c$ the map $\delta_1$ has to be surjective as well. Thus $\tw_{\nu c}=\nu c$.

    On the other hand, if $w_2^h=0$ then $\Z/2\ira \pi_1\Fr\X\sra\pi_1\X$ and $\Z/2\ira H_1(\Fr\X)\sra H_1X$ by Corollary~\ref{cor:w2-FrX}, so the element $z\in\Z/2$ survives abelianization. This means that modding out by $x\sim \bc \cdot x$ cannot kill the element $z$, so $\tw_{\nu c}\neq\nu c$.
    
    Finally, when $w_2^s=0$, $w_2^h\neq0$, the conjugation action by $\bc$ kills the element $z\in\Z/2\ira\pi_1(\Fr\X)$ if and only if we can find $y\in\pi_1\X$ that commutes with $\bc\in\pi_1\X$ so that $w_2^h([y\tm\bc])=[\wt{y},\wt{\bc}]\neq0$.
\end{proof}

\begin{rem}
     Note that when $w_2^h\neq0$ both options can occur in the same $X$ for different $\bc\in\pi_1\X$. For example, for the trivial element $\bc=1$ we have $w_2^h([y\tm 1])=0$ for all $y$, and $\tw_{\nu c}\neq\nu c$.
\end{rem}

We can now deduce the result stated in the introduction.

\begin{cor}[Theorem~\ref{thm-intro:framed-imm}]
\label{cor:framed-imm}
    For an oriented  smooth 4-manifold $\X$ and a framed embedded circle $\nu c\colon\nu\S^1=\S^1\tm\D^3\hra\X$, we have the following exact sequences.
\begin{enumerate}
 \item (totally nonspin) Assume $w_2^s\neq0$.
    \begin{enumerate}
\item 
    If there exists $b\in\pi_2\X$ with $w_2^s(b)=1$ and $b=\bc\cdot b$, (that is, there is an immersed sphere in $\X$ of odd self-intersection, and fixed by the action of $\bc$), then
    \[
    \begin{tikzcd}[column sep=small]
        \quad
        & \pi_1(\Imm^{\fr};\nu c)
            \rar[tail] 
        & \pi_1(\Imm;c)
            \rar[two heads] 
        & \Z/2, 
        & \nu c=\tw_{\nu c}.
    \end{tikzcd}
    \]
\item 
    Otherwise,
    \[
    \begin{tikzcd}[column sep=small]
        \Z/2
            \rar[tail]{\rot_{\nu c}}
        & \pi_1(\Imm^{\fr};\nu c)
            \rar
        & \pi_1(\Imm;c)
            \rar[two heads]
        & \Z/2, 
        & \nu c=\tw_{\nu c}.
    \end{tikzcd}
    \]
    \end{enumerate}
\item 
(spin or almost spin)
Assume $w_2^s=0$.
    \begin{enumerate}
\item
If there exists $y\in\Fix_{\bc}(\pi_1\X)$ such that $w_2^h([y\tm\bc])\neq0$ (that is, there is an immersed torus in $\X$ of odd self-intersection, and that contains $\bc$), then
    \[\begin{tikzcd}[column sep=small]
        \Z/2
            \rar[tail]{\rot_{\nu c}}
        & \pi_1(\Imm^{\fr};\nu c)
            \rar
        & \pi_1(\Imm;c)
            \rar[two heads] 
        & \Z/2, 
        & \nu c=\tw_{\nu c}.
    \end{tikzcd}
    \]
\item 
    Otherwise,
    \[\begin{tikzcd}[column sep=small]
        \Z/2
            \rar[tail]{\rot_{\nu c}}
        & \pi_1(\Imm^{\fr};\nu c)
            \rar[two heads] 
        & \pi_1(\Imm;c),
        &\qquad\hfill
        & \nu c\neq\tw_{\nu c}.
    \end{tikzcd}
    \]
    \end{enumerate}
\end{enumerate} 
    Moreover, $\rot_{\nu c}$ splits if $w_2=0$ ($\X$ is spin). More generally, it splits whenever the extension $\Z/2\to\Fix_{\nu c}(\pi_1\Fr\X)\sra\Fix_c(\pi_1\X)$ does.
\end{cor}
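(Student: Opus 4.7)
The plan is to read off the four cases from the exact sequence around $\pi_1(\Imm^{\fr};\nu c)$ extracted from the commutative diagram~\eqref{eq:lem1} of Lemma~\ref{lem:lem1}, namely
\[
    \Fix_{\bc}(\pi_2\X)\xrightarrow{w_2^s|}\Z/2\xrightarrow{\rot_{\nu c}}\pi_1(\Imm^{\fr};\nu c)\xrightarrow{\pi_1\forg_{\Imm}}\pi_1(\Imm;c)\xrightarrow{\delta_1}\Z/2\xrightarrow{\tw_{\nu c}}\pi_0\Imm^{\fr},
\]
combined with Lemma~\ref{lem:lem2}, which decides when $\tw_{\nu c}=\nu c\in\pi_0\Imm^{\fr}$. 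The left triangle in \eqref{eq:lem1} identifies $\ker(\rot_{\nu c})$ with the image of $w_2^s$ restricted to $\Fix_{\bc}(\pi_2\X)$, using that $\pi_2\ev_e$ surjects onto $\Fix_{\bc}(\pi_2\X)$ by Corollary~\ref{cor:Lambda}. Thus $\rot_{\nu c}$ is injective when no $\bc$-fixed class in $\pi_2\X$ has odd normal Euler number, and vanishes otherwise. Likewise, by exactness $\im(\delta_1)=\ker(\tw_{\nu c})$ is all of $\Z/2$ precisely when $\tw_{\nu c}=\nu c$, so the cokernel of $\pi_1\forg_{\Imm}$ is $\Z/2$ or $0$ according to the same dichotomy.

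First I would handle the totally nonspin case $w_2^s\neq 0$: Lemma~\ref{lem:lem2} gives $\tw_{\nu c}=\nu c$ and the surjection onto $\Z/2$ on the right, while the existence or non-existence of $b\in\Fix_{\bc}(\pi_2\X)$ with $w_2^s(b)=1$ controls whether $\rot_{\nu c}$ is zero (case (1)(a)) or injective (case (1)(b)). Next for $w_2^s=0$ the map $w_2^s|_{\Fix_{\bc}(\pi_2\X)}$ vanishes, so $\rot_{\nu c}$ is automatically injective; Lemma~\ref{lem:lem2} then says $\tw_{\nu c}=\nu c$ iff there exists $y\in\Fix_{\bc}(\pi_1\X)$ with $w_2^h([y\tm\bc])\neq 0$, separating cases (2)(a) and (2)(b) in the stated way.

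For the splitting claim, identify $\pi_1(\Imm^{\fr};\nu c)\cong\pi_1(\Lambda\Fr\X;\nu c)$ by Smale and use the evaluation map $\pi_1\ev_e\colon\pi_1(\Lambda\Fr\X;\nu c)\sra\Fix_{\nu c}(\pi_1\Fr\X)$ from Corollary~\ref{cor:Lambda}. Under $\pi_1\ev_e$ the image of $\rot_{\nu c}$ is exactly the central $\Z/2\le\pi_1\Fr\X$ coming from $\pi_1SO_4$ (cf.\ Remark~\ref{rem:rot} and Lemma~\ref{lem:FrX}), and this $\Z/2$ lies in the centraliser $\Fix_{\nu c}(\pi_1\Fr\X)$ because it is central. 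A splitting of the extension $\Z/2\hra\Fix_{\nu c}(\pi_1\Fr\X)\sra\Fix_c(\pi_1\X)$ therefore yields a retraction $\Fix_{\nu c}(\pi_1\Fr\X)\to\Z/2$; composing with $\pi_1\ev_e$ retracts $\pi_1(\Imm^{\fr};\nu c)$ onto $\rot_{\nu c}(\Z/2)$, as wanted. When $w_2=0$, Lemma~\ref{lem:FrX} gives $\pi_1\Fr\X\cong\Z/2\tm\pi_1\X$, whose restriction to centralisers splits tautologically, recovering the spin case.

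The only real obstacle is a bookkeeping one: one must check that the $\Z/2$ appearing as $\pi_1 SO_3$ in the fibre of $\forg_{\Imm}$ in \eqref{eq:les} matches, under $\pi_1\ev_e$, the central $\Z/2\le\pi_1\Fr\X$ of Lemma~\ref{lem:FrX}. This is already implicit in the commutativity of the big diagram~\eqref{eq:big-diag} and in the description of $\rot_{\nu c}$ as a simultaneous rotation of all normal frames (Remark~\ref{rem:rot}), so the splitting argument amounts to extracting a retraction from structure that is already in place, rather than constructing new geometry.
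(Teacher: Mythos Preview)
Your proof is correct and follows essentially the same approach as the paper: both read off the four cases from the exact sequence of Lemma~\ref{lem:lem1} together with Lemma~\ref{lem:lem2}, and both obtain the splitting by relating $\rot_{\nu c}$ via $\pi_1\ev_e$ to the central $\Z/2\le\Fix_{\nu c}(\pi_1\Fr\X)$. Your splitting argument is slightly more explicit in constructing the retraction $r\circ\pi_1\ev_e$, whereas the paper appeals to the commutativity of the big diagram~\eqref{eq:big-diag}, but the content is the same.
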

\begin{proof}
    By Lemma~\ref{lem:lem1} the map $\rot_{\nu c}$ is trivial if and only if there exists $b\in\Fix_{\bc}(\pi_2\X)$ with $w_2^s(b)=1$. Thus, it is trivial precisely in the case \textit{(1a)}, where the equivalent statement in the parenthesis was given in Corollary~\ref{cor:w2-dim4}. By Lemma~\ref{lem:lem2} the map $\tw_{\nu c}$ is trivial as soon as $w_2^s\neq0$, implying \textit{(1)}. Similarly, \textit{(2)} follows from the rest of Lemma~\ref{lem:lem2}.

    It remains to prove the splitting statement for $\rot_{\nu c}$ for the cases \textit{(1b)} and \textit{(2)}. In \eqref{eq:big-diag} consider the second sequence of vertical arrows from the top, whose images form the short exact sequence $\Z/2\ira\Fix_{\nu c}(\pi_1\Fr\X)\sra\Fix_c(\pi_1\X)$. By the commutativity of the diagram we see that $\rot_{\nu c}$ splits if this extension does, giving the second splitting claim. On the other hand, this extension is obtained by restricting to fixed point subgroups the extension $\Z/2\ira\pi_1\Fr\X\sra \pi_1\X$.
    By Lemma~\ref{lem:FrX} whether the last extension splits is determined by $w_2$.
\end{proof}

Note that for the splitting of $\rot_{\nu c}$ it suffices to frame only those loops that are fixed by $\bc$. This is less than splitting $\Z/2\ira\pi_1\Fr\X\sra \pi_1\X$, that is, framing all loops, as given by a spin structure.

\section{Embeddings}
\label{sec:framed-emb}

In this section we study the difference of the framed to the nonframed case in the setting of embeddings, and prove Theorem~\ref{thm-intro:framed-emb}.

We have the commutative diagram of fibrations
\begin{equation}\label{eq:diag-emb-imm}
    \begin{tikzcd}[row sep=small]
        \forg^{-1}(c)\rar[hook]\dar[sloped]{\sim} & \Emb(\nu\S^1,\X) \rar{\forg}\dar & \Emb(\S^1,\X)\dar[near start]{\incl},\\
        \forg_{\Imm}^{-1}(c)\rar[hook] & \Imm(\nu\S^1,\X)\rar{\forg_{\Imm}} & \Imm(\S^1,\X)
    \end{tikzcd}
\end{equation}
A point in the top fibre $\forg^{-1}(c)$ is given by a framing of the basepoint $c\colon\S^1\hra \X$ (that is, trivialisation of its normal bundle), and this agrees with the fibre $(\forg_{\Imm})^{-1}(c)$ at the bottom. By general position we have an isomorphism $\pi_0\incl$ and a surjection $\pi_1\incl$.

Combining this with \eqref{eq:diag-imm-Lambda} and \eqref{eq:lem1}, we obtain the following maps between two horizontal exact sequences, and two commutative triangles:
\[\begin{tikzcd}[column sep=10pt, nodes={scale=0.9}]
        \pi_2(\Emb;c)
            \rar{\delta^{\Emb}_2}\dar[swap]{\pi_2\incl}
        & \pi_1 \forg^{-1}(c)
            \rar\dar[tail,two heads]  
        & \pi_1(\Emb^{\fr};\nu c)
            \rar{\pi_1\forg}\dar 
        & \pi_1(\Emb;c)
            \rar{\delta^{\Emb}_1}\dar[two heads][swap]{\pi_1\incl}
        & \pi_0 \forg^{-1}(c) 
            \rar\dar[tail,two heads]  
        & \pi_0\Emb^{\fr}
            \dar \rar[two heads]{\pi_0\forg} 
        & \pi_0\Emb
            \dar[tail,two heads]{\pi_0\incl}\\
        \pi_2(\Imm;c)
            \rar{\delta^{\Imm}_2} \dar[two heads][swap]{\pi_2\ev_e}  
        & \Z/2
            \rar{\rot_{\nu c}}
        & \pi_1(\Imm^{\fr};\nu c)
            \rar{\pi_1\forg_{\Imm}} 
        & \pi_1(\Imm;c)
            \rar{\delta^{\Imm}_1}
        & \Z/2
            \rar{\tw_{\nu c}}
        & \pi_0\Imm^{\fr}
            \rar[two heads]{\pi_0\forg_{\Imm}} 
        & \pi_0\Imm
            \dar[tail,two heads]{\pi_0p} \\
        \Fix_{\bc}(\pi_2\X)
            \ar{ur}[swap]{w_2^s}
        && 
        & \faktor{\pi_2\X}{b=\bc\cdot b}
            \ar{ur}[swap]{w_2^s}\uar[tail]{\foliate_{\bc}}
        && 
        & \faktor{\pi_1\X}{x\sim y\cdot x}
    \end{tikzcd}
\]
By the commutativity of this diagram and the fact that $\pi_i(\forg^{-1}(c))\cong\pi_i(\forg_{\Imm}^{-1}(c))\cong\Z/2$ for $i=0,1$, we get a description of $\delta_1^{\Emb}$ and $\delta_2^{\Emb}$. Namely,
\[
    \im(\delta_1^{\Emb})=\im(\delta_1^{\Imm})\leq\Z/2,
\]
so the (non)triviality of $\tw_{\nu c}$ is determined as in Corollary~\ref{cor:framed-imm}. In particular, we always have
\[
    \pi_0\Emb^\fr\cong \pi_0\Imm^{\fr}.
\]
However, since $\pi_2\incl\colon\pi_2(\Emb;c)\to \pi_2(\Imm;c)$ is in general not surjective, we cannot yet make a conclusion about the (non)triviality of $\rot_{\nu c}$. We need to first determine the source of $\delta_2^{\Emb}$, or equivalently, the image of $\pi_2\incl$. 
For this we rely on the following result from our previous work \cite{K-Dax}, using the homomorphism $\dax^{whisk}_c$ from~\eqref{eq:dax-whisk}. 
\begin{lem}\label{lem:dax-whisk}
    The image of $\pi_2\ev_e\circ\pi_2\incl$ consists precisely of those $b\in\pi_2\X$ that have $\dax^{whisk}_c(b)=0$.
\end{lem}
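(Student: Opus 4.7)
The plan is to import, essentially as a black box, the computation of the relevant connecting map from the author's previous paper \cite{K-Dax} and then chase the fundamental commutative square relating $\Emb$ and $\Imm$.

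First I would use the fact that $\incl\colon\Emb\to\Imm$ is a fibration (or at least admits a homotopy-fibre sequence in the sense used throughout the paper), so the image $\im(\pi_2\incl)\subseteq\pi_2(\Imm;c)$ coincides with the kernel of the connecting homomorphism $\delta_{\mathrm{d}\incl}\colon\pi_2(\Imm;c)\to\pi_1(\Imm/\Emb)$ (or the appropriate homotopy fibre). Concretely this says that a class $\alpha\in\pi_2(\Imm;c)$ lifts to $\pi_2(\Emb;c)$ precisely when $\delta_{\mathrm{d}\incl}(\alpha)=0$. This is purely formal from the long exact sequence in homotopy groups.

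Second, I would invoke \cite[Thm.~4.5]{K-Dax} (and the paragraph following \cite[Lem.~4.9]{K-Dax}), which identifies $\delta_{\mathrm{d}\incl}$ with $\dax^{whisk}_c\circ\pi_2\ev_e$. More precisely, that result says that the sequence
\[
\begin{tikzcd}[column sep=large]
        \ker(\delta_{\mathrm{d}\incl})
            \rar{\pi_2\ev_e}
        & \Fix_{\bc}(\pi_2\X)
            \rar{\dax^{whisk}_c}
        & \faktor{\Z[\pi_1\X]}{\langle 1\rangle\oplus\dax_u(\pi_3(\X\sm\D^4))}
\end{tikzcd}
\]
is exact, which is the content one needs. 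Here the target of $\pi_2\ev_e$ is $\Fix_{\bc}(\pi_2X)$ because (by Corollary~\ref{cor:Lambda} together with Lemma~\ref{lem:imm-Lambda}) the evaluation at the basepoint $e$ realises $\pi_2(\Imm;c)\twoheadrightarrow\Fix_{\bc}(\pi_2X)$.

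Finally, combining these two inputs, $\im(\pi_2\ev_e\circ\pi_2\incl)=\pi_2\ev_e(\ker\delta_{\mathrm{d}\incl})=\ker(\dax^{whisk}_c)$, which is exactly the asserted description. The main (indeed only) real work is the identification of the connecting map with the Dax-type invariant $\dax^{whisk}_c$, and this is precisely what is done in \cite{K-Dax}; from the perspective of the present paper it is a citation rather than a computation, so I would simply state the needed exact sequence and deduce the conclusion in two lines.
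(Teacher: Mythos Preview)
Your proposal is correct and matches the paper's own proof essentially line for line: both identify $\im(\pi_2\incl)$ with $\ker(\delta_{\mathrm{d}\incl})$ via the long exact sequence, cite \cite[Thm.~4.5 and the paragraph after Lem.~4.9]{K-Dax} for the exactness of $\ker(\delta_{\mathrm{d}\incl})\xrightarrow{\pi_2\ev_e}\Fix_{\bc}(\pi_2\X)\xrightarrow{\dax^{whisk}_c}\cdots$, and conclude $\im(\pi_2\ev_e\circ\pi_2\incl)=\ker(\dax^{whisk}_c)$.
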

 \begin{proof}
    By the long exactness sequence of the fibration $\incl\colon\Emb\to\Imm$, the image of $\pi_2\incl$ is the same as the kernel of the connecting map $\delta_{d\incl}$. This kernel was computed in \cite[Thm.\ 4.5, par.\ after Lem.\ 4.9]{K-Dax}, which says that 
    \[\begin{tikzcd}[column sep=2cm]
        \im(\pi_2\incl)=\ker(\delta_{d\incl})
            \ar{r}{\pi_2\ev_e} 
        & \Fix_{\bc}(\pi_2\X)
            \rar{\dax^{whisk}_c}
        & \faktor{\Z[\pi_1\X\sm\{1\}]}{\dax_u(\pi_3(\X\sm\D^4))}
    \end{tikzcd}
    \]
    is an exact sequence, for $\dax^{whisk}_c\coloneqq\Da\circ\delta^{whisk}_c$. Therefore, $\im(\pi_2\ev_e\circ\pi_2\incl)=\ker(\dax^{whisk}_c)$.
\end{proof}

Combining this with the last diagram we obtain the following result.
\begin{cor}\label{cor:framed}
    For an oriented 4-manifold $\X$ and $\nu c\colon\nu\S^1\hra \X$ there is an exact sequence of groups and sets:
    \[\begin{tikzcd}[column sep=17pt]
        \ker(\dax^{whisk}_c)
            \rar{w_2^s|} 
        & \Z/2
            \rar{\rot_{\nu c}}
        & \pi_1(\Emb^{\fr};\nu c)
            \rar{\pi_1\forg} 
        & \pi_1(\Emb;c)
            \rar{\delta_1^{\Emb}}
        & \Z/2
            \rar{\tw_{\nu c}} 
        & \pi_0\Emb^{\fr}
            \rar[two heads]{\pi_0\forg} 
        & \pi_0\Emb.
    \end{tikzcd}
    \]
\end{cor}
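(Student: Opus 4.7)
The plan is to derive the claimed seven-term sequence from the long exact sequence (of groups and pointed sets) of the fibration $\forg\colon\Emb^{\fr}\to\Emb$ with fibre $\forg^{-1}(c)$, and then to identify its connecting maps with the immersion analogues already computed in Lemma~\ref{lem:lem1}.

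First, I will use that $\forg^{-1}(c)=\forg_{\Imm}^{-1}(c)\simeq\Lambda SO_3$, so $\pi_0\forg^{-1}(c)\cong\pi_1\forg^{-1}(c)\cong\Z/2$ with the same identifications as in the immersion case. Naturality with respect to the comparison diagram \eqref{eq:diag-emb-imm} identifies these fibres and produces commuting squares, so the connecting maps $\delta_1^{\Emb}$ and $\delta_2^{\Emb}$ factor through their immersion counterparts via $\pi_1\incl$ and $\pi_2\incl$ respectively. By Lemma~\ref{lem:lem1}, this justifies the labels $\delta_1^{\Imm}$, $\rot_{\nu c}$ and $\tw_{\nu c}$ on the maps in the claimed sequence. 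Surjectivity of $\pi_0\forg$ (every embedded circle in an oriented 4-manifold admits a framing) gives exactness at $\pi_0\Emb$.

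The only nontrivial step is to identify the image of $\delta_2^{\Emb}\colon\pi_2(\Emb;c)\to\Z/2$, which controls exactness at the leftmost $\Z/2$. By naturality and Lemma~\ref{lem:lem1},
\[
    \delta_2^{\Emb}=\delta_2^{\Imm}\circ\pi_2\incl=w_2^s\circ\pi_2\ev_e\circ\pi_2\incl.
\]
The key input is Lemma~\ref{lem:dax-whisk}, which identifies the image of $\pi_2\ev_e\circ\pi_2\incl$ as exactly $\ker(\dax^{whisk}_c)\subseteq\Fix_{\bc}(\pi_2\X)$. Therefore $\im(\delta_2^{\Emb})=w_2^s\bigl(\ker(\dax^{whisk}_c)\bigr)$, which is precisely the content of the stated exactness at the first $\Z/2$.

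The hard part, namely determining which classes in $\pi_2(\Imm;c)$ lift along $\pi_2\incl$ to embeddings, is entirely packaged into Lemma~\ref{lem:dax-whisk} (coming from \cite{K-Dax}). All remaining exactness statements are immediate from the long exact sequence of $\forg$, so once the comparison with the immersion fibration is in place the proof reduces to a routine diagram chase.
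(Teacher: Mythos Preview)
Your argument is correct and follows the same route as the paper: set up the long exact sequence of the fibration $\forg$, identify the fibre with $\forg_{\Imm}^{-1}(c)\simeq\Lambda SO_3$ via the comparison diagram~\eqref{eq:diag-emb-imm}, use naturality to write $\delta_i^{\Emb}=\delta_i^{\Imm}\circ\pi_i\incl$, and then invoke Lemma~\ref{lem:lem1} together with Lemma~\ref{lem:dax-whisk} to identify $\im(\delta_2^{\Emb})=w_2^s\bigl(\ker(\dax^{whisk}_c)\bigr)$. This is exactly what the paper does in the paragraphs preceding the corollary.
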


From this and Theorem~\ref{thm-intro:framed-imm} we immediately deduce Theorem~\ref{thm-intro:framed-emb}.


\printbibliography[heading=bibintoc]

\end{document}